\newcommand{\R}{\mathbb{R}}
\newcommand{\N}{\mathbb{N}}
\renewcommand{\S}{\mathbb{S}}
\newtheorem{theorem}{Theorem} 
\newtheorem{lemma}{Lemma} 
\newtheorem{proposition}{Proposition} 
\theoremstyle{definition}
\newtheorem{definition}{Definition} 
\newtheorem{remark}{Remark}
\def\intj{\mbox{int}}
\renewcommand\div{\mbox{div}\,}
\def\curl{\text{curl}\,}
\def\eps{\varepsilon}
\begin{document}
\title{Prescribing the motion of a set of particles \\ in a $3$D perfect fluid}
%
%
\author{O. Glass}
\address{Ceremade, Universit\'e Paris-Dauphine, Place du Mar\'echal de Lattre de Tassigny, 75775 Paris Cedex 16, France}
\curraddr{}
\email{glass@ceremade.dauphine.fr}
\thanks{}

\author{T. Horsin}
\address{Universit\'e de Versailles, 45 avenue des Etats-Unis, F78030 Versailles, France}
\curraddr{}
\email{horsin@math.uvsq.fr}
\thanks{}
\subjclass[2000]{Primary }
\keywords{}
\date{17/08/2011}
\dedicatory{}
\begin{abstract}
We establish a result concerning the so-called Lagrangian controllability of the Euler equation for incompressible perfect fluids in dimension $3$. More precisely we consider a connected bounded domain of $\R^{3}$ and two smooth contractible sets of fluid particles, surrounding the same volume. We prove that given any initial velocity field, one can find a boundary control and a time interval such that the corresponding solution of the Euler equation makes the first of the two sets approximately reach the second one. 
\end{abstract}
\maketitle
%
%
%
%
%
%
%
%
%
\section{Introduction}
\subsection{Presentation of the problem}
In this paper, we are concerned with the Lagrangian controllability of the three dimensional Euler equation for perfect incompressible fluids by means of a boundary control. The problem under view is the following. \par
Let $\Omega$ be a smooth bounded domain of ${\mathbb R}^3$ and let $\Gamma$ be a nonempty open part of its boundary $\partial \Omega$. Given $T>0$, we consider the classical Euler system for perfect incompressible fluids in $\Omega$:
\begin{align}
\label{eq:euler.1}
&\partial_t u+(u \cdot\nabla) u + \nabla p=0 \ \text{ in } \ (0,T)\times \Omega, \\ 
\label{eq:euler.2}
&\div u=0 \ \text{ in } \ (0,T)\times \Omega, \\
\label{eq:euler.3}
&u_{|t=0}=u_0 \ \text{ in }\ \Omega.
\end{align}
As boundary conditions, one usually considers the impermeability condition on $\partial \Omega$:
\begin{equation*}
u \cdot n=0 \ \text{ on } \ (0,T) \times \partial \Omega,
\end{equation*}
where $n$ stands for the unit outward normal vector field on $\partial \Omega$.
However, in the problem under view, the impermeability condition is merely imposed on $\partial \Omega \setminus \Gamma$:
\begin{equation}
\label{eq:euler.4}
u \cdot n=0 \ \text{ on } \ (0,T)\times (\partial \Omega\setminus \Gamma).
\end{equation}
Consequently, since we do not prescribe boundary values for $u$ on $(0,T)\times \Gamma$, the problem stated as above is underdetermined. It is in fact an implicit control problem; in other words, we consider the boundary values on $\Gamma$ as a control on this system, i.e. a way to influence the fluid in a prescribed way. If one wants to close the system, as shown by Khazhikov \cite{Ka}, one may consider as boundary conditions on $\Gamma$ the following ones:
\begin{equation*}
u \cdot n\text{ on }(0,T)\times \Gamma \ \text{ and } \ 
\curl u \wedge n\text{ on } \{ (t,x) \in (0,T)\times \Gamma\text{ such that } u \cdot n<0 \},
\end{equation*}
that is, the normal part of the velocity field on $\Gamma$ and the tangential entering vorticity. We refer to \cite{Ka} for more details on this notion of boundary values. Due to the complexity of this formulation of boundary values, and as in the literature concerning the controllability of the Euler equation (see e.g. \cite{93COR}, \cite{96COR-2}, \cite{00Gla}, \cite{glassa}), we prefer not to refer to them explicitely and to look for the solution $(u,p)$ itself. Once a solution of \eqref{eq:euler.1}-\eqref{eq:euler.4} is given, its suitable trace on $(0,T)\times \Gamma$ can be thought as the control. \par
\ \par
The question that we raise is the possibility of prescribing the motion of a set of particles driven by a fluid governed by \eqref{eq:euler.1}-\eqref{eq:euler.4}. It is classical in fluid mechanics that one can describe the motion of a fluid from two different points of view. One possibility, referred as the Eulerian description of the fluid, is to describe the velocity of fluid particles that pass ``through'' a given point. The other possibility, referred as the Lagrangian description of the fluid consists in following fluid particles along the flow. To that purpose one has to solve the differential equation associated to the velocity field. \par
The notion of controllability that we consider in this paper is different from the usual notion of controllability and, in some sense, closer to the Lagrangian description of the fluid than the usual notion of controllability. The standard sense of controllability would refer to the possibility of driving the velocity field (the state of the system), from one prescribed value to a fixed target. The study of the controllability of the Euler equation has been initiated by J.-M. Coron in \cite{93COR} and \cite{96COR-2}, and then studied by the first author in \cite{00Gla} and \cite{glassa}. Also related to the boundary control of the velocity field, the question of asymptotic stabilization of this system around zero was studied by J.-M. Coron in $2$-D simply connected domains (see \cite{CORSTA99}) and by the first author for what concerns more general $2$-D domains (see \cite{Glass-stabilisation}). \par
In this paper, we consider the problem of prescribing the displacement of a set of particles, rather than the velocity field in final time. In \cite{GlHo08}, the authors showed that in $2$-D, one can indeed prescribe approximately the motion of some specific sets of fluids. The goal of the present paper is to consider the case of the dimension $3$.  \par
\subsection{Notations and definitions}
In this section, we fix the notations and give the basic definitions. \par
In the sequel, smooth curves, surfaces or maps will mean $C^\infty$ ones. We will denote by $C^\omega$ the class of real analytic curves/surfaces. The volume of a borelian set $A$ in $\R^{3}$ will be denoted by $|A|$. 
Given a suitably regular vector field $u$, we will denote by $\phi^u$ the flow of $u$, defined (when possible) by 
\begin{equation} \label{eq:flotdeu}
\partial_t\phi^u(t,s,x)=u(t,\phi^u(t,s,x)) \ \text{ and } \ u(s,s,x)=x.
\end{equation}
For $A$ a subset of $\R^{3}$ and $\eta>0$, we will denote:
\begin{equation*}
V_{\eta}(A) := \{ x \in \R^{3} \ / \ d(x,A) < \eta \}.
\end{equation*}
We will also use the notation $L^{p}(0,T;X(\Omega(t)))$ or $C^{k}([0,T];X(\Omega(t)))$ of time-dependent functions with values in a space $X$ of functions on a domain depending on time $\Omega(t) \subset \R^{3}$. This refers to the space functions that can be extended to $L^{p}(0,T;X(\R^{3}))$ or $C^{k}([0,T];X(\R^{3}))$. \par
\ \par
We now recall some definitions from differential geometry (see e.g. \cite{Hirsch}).
\begin{definition}
A Jordan surface $\gamma$ in $\R^{3}$ is the image of $\S^2$ by some homeomorphism $h$. According to the Jordan-Brouwer Theorem (see \cite{Brou12} or \cite{GP}), $\R^{3} \setminus \gamma$ has two connected components, one of which is bounded and will be denoted by $\intj(\gamma)$. Moreover when the homeomorphism $h$ is a diffeomorphism, we have a unit outward normal vector field on $\gamma$, which we will denote by $\nu$.
\end{definition}
\begin{definition}
Two Jordan surfaces $\gamma_0$ and $\gamma_1$ embedded in $\R^3$ are said to be isotopic in $\Omega$, if there exists a continuous map $\mathfrak{I}:[0,1]\times \S^2\to \Omega$ such that $\mathfrak{I}(0)=\gamma_{0}$, $\mathfrak{I}(1)=\gamma_{1}$ and for each $t\in [0,1],\,\mathfrak{I}(t,\cdot)$ is an homeomorphism of $\S^2$ into its image. When, for some $k\in \N \setminus \{0\}$, this homeomorphism is a $C^{k}$-diffeomorphism with respect the space variable, $\mathfrak{I}$ will be said to be a $C^k$-isotopy, or, when $k=\infty$, a smooth isotopy. A one-parameter continuous family of diffeomorphisms of $\Omega$ is called a diffeotopy of $\Omega$.
\end{definition}
When their regularity is not specified, the geometrical objects are considered to be smooth in the sequel. 
We are now in position to define the Lagrangian controllability (the corresponding two-dimensional notion was introduced in \cite{GlHo08}). 
\begin{definition}\label{Def:ELC}
Let $T>0$ be a given time. We will say that the exact lagrangian controllability of \eqref{eq:euler.1}-\eqref{eq:euler.4} holds if
given two Jordan surfaces $\gamma_0$ and $\gamma_1$ included in $\Omega$ such that
\begin{gather} \label{CondGamma1}
\gamma_{0} \text{ and } \gamma_{1} \text{ are isotopic in } \Omega, \\
\label{CondGamma2}
|\intj(\gamma_{0})| = |\intj(\gamma_{1})|,
\end{gather}
and given a regular initial data $u_{0}$ satisfying
\begin{gather} \label{CIDiv}
\div u_{0}=0 \ \text{ in } \ \Omega, \\
\label{CIBord}
u_{0} \cdot n =0 \ \text{ on } \ \partial \Omega \setminus \Gamma,
\end{gather}
there exists a solution $(u,p)$ of \eqref{eq:euler.1}-\eqref{eq:euler.4} such that one has
\begin{gather}
\label{eq:exact2}
\forall t\in [0,T],\ \phi^u(t,0,\gamma_0)\subset \Omega, \\
\label{eq:exact1}
\phi^u(T,0,\gamma_0)=\gamma_1.
\end{gather}
up to reparameterization.
\end{definition}
Let us say a few words concerning the regularity of $u$. In general, strong solutions of the Euler equation are considered in a H\"older or Sobolev space with respect to $x$, which is included in the space of Lipschitz functions. In the sequel, the solutions will be taken in the H\"older space $C^{k,\alpha}(\Omega)$, $k \geq 1$. \par
If the exact Lagrangian controllability does not occur, one may try to weaken this definition as follows.
\begin{definition}\label{Def:ALC}
We will say that the approximate Lagrangian controllability of \eqref{eq:euler.1}-\eqref{eq:euler.4} holds in time $T$ and in norm $C^k$ if given $\gamma_{0}$, $\gamma_{1}$ and $u_{0}$ as above, and given any $\eps>0$, there exists a solution $(u,p)$ of  \eqref{eq:euler.1}-\eqref{eq:euler.4} such that \eqref{eq:exact2} holds and that
\begin{equation}
\label{eq:approx}
\| \phi^u(T,0,\gamma_0)-\gamma_1 \|_{C^k(\S^{2})}<\eps,
\end{equation}
up to reparameterization.
\end{definition}
\begin{remark} \label{rem.1}
It is easy to see that the conditions that
$\gamma_0$ and $\gamma_1$ should be isotopic and enclose the same volume are necessary in order that a smooth volume-preserving flow drives $\gamma_{0}$ to $\gamma_{1}$. In particular $|\intj(\gamma_0)|=|\intj(\gamma_1)|$ comes from the incompressibility of the fluid.
\end{remark}
\begin{remark}
Condition \eqref{eq:exact2} allows to make sure that one ``controls'' the fluid zone for all time by making it stay in the domain; Conditions \eqref{eq:exact1} or \eqref{eq:approx} would not have a clear meaning without it.
\end{remark}
\subsection{Main result}
The main result of this paper is the following, establishing an approximate Lagrangian controllability property.
\begin{theorem}\label{th:fond.1}
Let $\alpha \in (0,1)$ and $k \in \N \setminus \{0\}$. Consider $u_0\in C^{k,\alpha}({\Omega};\R^{3})$ satisfying \eqref{CIDiv}-\eqref{CIBord} and $\gamma_0$ and $\gamma_1$ two contractible $C^\infty$ embeddings of $\mathbb S^2$ in $\Omega$ satisfying \eqref{CondGamma1}-\eqref{CondGamma2}. Then for any $\eps>0$, there exist a time $T>0$ and a solution $(u,p)$ in $L^{\infty}(0,T;C^{k,\alpha}(\Omega;\R^{4}))$ to \eqref{eq:euler.1}-\eqref{eq:euler.4} on $[0,T]$ such that \eqref{eq:exact2} and \eqref{eq:approx} hold (up to reparameterization).
%
%
\end{theorem}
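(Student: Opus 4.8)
## Proof Plan

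The plan is to reduce the Lagrangian controllability problem to a question about a well-chosen \emph{target flow}, and then to approximate the trajectories of that flow by genuine solutions of the Euler equations using a return-method / domain-extension argument. First I would choose a smooth isotopy $\mathfrak{I}$ between $\gamma_0$ and $\gamma_1$ in $\Omega$ provided by \eqref{CondGamma1}; the difficulty is that an arbitrary isotopy will not be realized by a \emph{volume-preserving} diffeotopy, so one must use \eqref{CondGamma2} to correct it. Concretely, since $\gamma_0$ and $\gamma_1$ are contractible and enclose the same volume, I would first construct a smooth diffeotopy $(\Psi_t)_{t\in[0,1]}$ of $\Omega$, compactly supported away from $\partial\Omega\setminus\Gamma$, such that $\Psi_0=\mathrm{Id}$ and $\Psi_1(\gamma_0)=\gamma_1$; then using Moser's volume-correction lemma (Dacorogna–Moser) on the family of forms $\Psi_t^\ast(\mathrm{vol})$, reparametrize to obtain a \emph{volume-preserving} diffeotopy with the same endpoint behavior on $\gamma_0$. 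This yields a smooth time-dependent divergence-free vector field $w(t,x)$, tangent to $\partial\Omega\setminus\Gamma$, whose flow carries $\gamma_0$ to $\gamma_1$ while keeping it inside $\Omega$.

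The next step is to realize $w$ — or rather its effect on the particle set — by a solution of the Euler equation. This is where the boundary control on $\Gamma$ enters. Following the strategy of Coron's return method as adapted to the Euler equation in \cite{93COR,96COR-2,00Gla} and to the Lagrangian setting in \cite{GlHo08}, I would look for the velocity field in the form $u = \nabla\theta + $ (a small correction), where $\theta(t,x)$ is a harmonic potential chosen so that the associated potential flow transports $\gamma_0$ along (an approximation of) the prescribed isotopy. The key point is that for a potential flow $u=\nabla\theta$ with $\Delta\theta=0$, the pressure is given explicitly by Bernoulli's law $p = -\partial_t\theta - \tfrac12|\nabla\theta|^2$, so $(u,p)$ solves \eqref{eq:euler.1}--\eqref{eq:euler.2} exactly; the boundary condition \eqref{eq:euler.4} is just a Neumann condition $\partial_n\theta = 0$ on $\partial\Omega\setminus\Gamma$, which leaves enough freedom in $\theta$ on $\Gamma$. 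The nontrivial issue is that the actual initial data $u_0$ is \emph{not} a gradient (its vorticity need not vanish), so one cannot use a pure potential flow starting from $u_0$.

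To handle the nonzero vorticity I would use the standard device of making the flow fast: rescale time so that on a very short interval $[0,T]$ the transport term $(u\cdot\nabla)u$ and the vorticity stretching are dominated by a large potential part. More precisely, I would extend $\Omega$ to a slightly larger smooth domain $\widetilde\Omega$ across $\Gamma$, extend $u_0$ to a divergence-free field on $\widetilde\Omega$, and look for $u$ on $[0,T]\times\widetilde\Omega$ of the form $u(t,x) = \tfrac1T\,\nabla\theta(x) + v(t,x)$, where $\nabla\theta$ is the large potential piece steering $\gamma_0$ toward $\gamma_1$ in unit rescaled time and $v$ carries the vorticity. Writing the vorticity equation for $\omega = \curl u$, one sees that $\omega$ is transported and stretched by the flow of $u$; a Gronwall estimate shows $\|\omega(t)\|_{C^{k-1,\alpha}} \le \|\omega_0\|\exp(Ct\|\nabla u\|)$, and since the transport of $\gamma_0$ happens in time $\sim T$ while the flow magnitude is $\sim 1/T$, the geometric displacement is $O(1)$ but the \emph{deviation} of the trajectory of $\gamma_0$ from the potential-flow trajectory can be made $O(T)$, hence $<\eps$. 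One then restricts the constructed solution from $\widetilde\Omega$ back to $\Omega$; the trace on $(0,T)\times\Gamma$ is the control. Finally one checks \eqref{eq:exact2} holds throughout, by keeping the isotopy compactly inside $\Omega$ with a fixed margin and choosing $T$ small enough that the error does not push $\gamma_0$ out.

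The main obstacle, and the step I expect to be most delicate, is the first one: producing a \emph{volume-preserving, compactly supported in $\Omega$} isotopy from $\gamma_0$ to $\gamma_1$ realizing the prescribed topological isotopy, given only the volume constraint \eqref{CondGamma2} — the contractibility hypothesis is used precisely to guarantee that the ``interior'' regions can be matched up (so that no flux obstruction arises) and that the isotopy can be taken with support in a contractible subset of $\Omega$ away from $\partial\Omega\setminus\Gamma$. The control-theoretic part (realizing a divergence-free transport field as the flow of an Euler solution by a fast potential flow plus vorticity correction) is essentially the 3D analogue of the argument in \cite{GlHo08} and \cite{00Gla}, and the estimates there are by now routine; the geometric construction is where the 3D topology genuinely differs from the 2D case treated in \cite{GlHo08}.
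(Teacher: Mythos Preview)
Your overall architecture --- (i) build a volume-preserving isotopy carrying $\gamma_0$ to $\gamma_1$, (ii) approximate its action on $\gamma_0$ by a potential flow $\nabla\theta$, (iii) perturb this potential flow into a genuine Euler solution carrying the given vorticity, (iv) rescale in time to handle general $u_0$ --- is exactly the paper's strategy. However, you have misidentified where the real work lies, and the step you treat as routine is in fact the heart of the proof.

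You write that the potential flow $\nabla\theta$ is ``chosen so that the associated potential flow transports $\gamma_0$ along (an approximation of) the prescribed isotopy'', and then declare this ``by now routine''. It is not. The divergence-free field $w$ you constructed is \emph{not} a gradient, and there is no reason a harmonic function on $\Omega$ with $\partial_n\theta=0$ on $\partial\Omega\setminus\Gamma$ should have a flow that tracks $\phi^w(t,0,\gamma_0)$. The paper devotes all of Section~\ref{Sec:PotentialFlows} to this: for each $t$ one solves the interior Neumann problem $\Delta\psi=0$ in $\intj(\gamma(t))$, $\partial_\nu\psi=w\cdot\nu$ on $\gamma(t)$; one then needs $\psi$ to extend harmonically \emph{across} $\gamma(t)$, which forces a reduction to the real-analytic case (Whitney's embedding of $\gamma_0$ into an analytic family, analytic approximation of $w$) and an application of a Cauchy--Kowalevsky-type extension (Theorem~\ref{th:cauchy-kow}); finally one invokes a Runge-type harmonic approximation theorem (Theorem~\ref{th:approxharm1}) to replace the local $\psi$ by a global harmonic $\theta$ on $\Omega$ satisfying the Neumann condition on $\partial\Omega\setminus\Gamma$. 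None of this is in your proposal, and without it there is no $\theta$.

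Conversely, the step you call ``the main obstacle'' --- the volume-preserving isotopy --- is dispatched in the paper by a direct citation of Krygin's theorem (Theorem~\ref{th:isotop.1}). Your Dacorogna--Moser suggestion is reasonable in spirit, but as stated it is not quite right: composing $\Psi_t$ with the Moser correction will in general move $\Psi_1(\gamma_0)$ away from $\gamma_1$, and ``reparametrizing'' does not fix this. One needs the correction to be supported away from the moving surface, which is essentially what Krygin proves.

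Finally, your step (iii) differs from the paper's: you propose extending the domain across $\Gamma$ and solving Euler in $\widetilde\Omega$, as in \cite{00Gla}. The paper instead works directly in $\Omega$, sets up a Schauder fixed point for an operator $u\mapsto V$ built from the vorticity transport equation in an ambient ball (via an extension operator $\pi$) and a div--curl reconstruction in $\Omega$ with the correct normal trace, together with tangential corrections $\sum\lambda_i Q_i$ to handle the first cohomology of $\Omega$. Your domain-extension route could likely be made to work, but it is not what is done here, and you would still need the potential-flow approximation of Section~\ref{Sec:PotentialFlows} as input.
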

%
%
%
%
\begin{remark}
Let us check that, as in the two-dimensional case (see \cite{GlHo08}), the exact lagragian controllability does not hold. As is classical, in $3$-D the vorticity of the fluid
\begin{equation} \label{DefVor}
\omega:=\curl u,
\end{equation}
satisfies the equation
\begin{equation} \label{EqVor}
\partial_t\omega +(u \cdot \nabla) \omega=(\omega \cdot \nabla) u.
\end{equation}
Denoting $w(t,x)=\omega(t,\phi^u(t,0,x))$, we see that
\begin{equation*}
\partial_t w=(w \cdot \nabla)u(t,\phi^u(t,0,x)). 
\end{equation*}
Thus if initially $\curl u_0=0$ in a neighborhood of $\gamma_0$ then $\curl u=0$ in a neighborhood of $\phi^u(t,0,\gamma_0)$ as long as \eqref{eq:exact2} is true. \par
Now, since $u$ satisfies \eqref{eq:euler.2}, for each time $t$, in such a neighborhood of $\phi^u(t,0,\gamma_0)$ and away from $\partial \Omega$, $u(t,\cdot)$ is locally the gradient of a harmonic function, and hence is real analytic . Therefore, if $\gamma_0$ is a real-analytic submanifold, so will be $\phi^u(t,0,\gamma_0)$ for each $t\in [0,T]$. It follows that the exact Lagrangian controllability does not hold in general, since one may take $\gamma_1$ smooth but non analytic.
\end{remark}
\begin{remark}
As will be clear from the proof, the time $T$ whose existence is granted by Theorem \ref{th:fond.1} can be made arbitrarily {\it small}. The result does not require a time long enough in order for the controllability property to take place, but a time small enough to make sure that no blow up phenomenon occurs. 
\end{remark}
Let us now briefly describe the strategy to prove Theorem \ref{th:fond.1}. First, we use a construction due to A. B. Krygin \cite{krygin} to obtain a solenoidal vector field $X\in C^\infty_0((0,1) \times \Omega;\R^{3})$, such that 
\begin{gather*}
\forall t\in [0,1], \ \ \phi^X(t,0,\gamma_0)\subset \Omega, \\
\phi^X(1,0,\gamma_0)= \gamma_{1}.
\end{gather*}
Then we will prove that there exist potential flows (that is, time-dependent gradients of harmonic functions in $\Omega$), which approximate suitably the action of $X$ on $\gamma_{0}$. This gives us a solution $(\bar{u},\bar{p})$ of \eqref{eq:euler.1}-\eqref{eq:euler.4} with $u_0=0$ and $T=1$ such that 
\begin{equation*}                                                                                                                                                                                                                                                                                                                                                            \| \phi^{\bar{u}}(0,t,\gamma_0)-\phi^X(0,t,\gamma_0)\|_{C^k(\S^2)}\leq \eps.                                                                                                                                                                                                                                                                                                                                                                                                    
\end{equation*}
Then we will use a time-rescaling argument due to J.-M. Coron \cite{96COR-2} (see also \cite{00Gla}), to get the result for non trivial $u_{0}$ and $T$ small enough. \par
\ \par
The rest of the paper is divided as follows. In Section \ref{Sec:PotentialFlows}, we consider the existence of the above mentionned vector field $X$, and then of the potential flows. In Section \ref{Sec:Proof}, we prove Theorem \ref{th:fond.1} by constructing the solution $(u,p)$ taking the initial condition $u_{0}$ into account. \par
\ \\
{\bf Acknowledgements.} The authors are partially supported by the Agence Nationale de la Recherche (ANR-09-BLAN-0213-02). They thank Jean-Pierre Puel for many useful discussions leading to this paper.
%
%
%
%
%
%
%
%
%
%
\section{Potential flows}
\label{Sec:PotentialFlows}
%
%
%
%
%
\subsection{A solenoidal vector field mapping $\gamma_0$ onto $\gamma_1$}
\label{subsec:proof1}
%
%
%
The vector field $X$ is obtained as a direct consequence of the following result due to A. B. Krygin.
\begin{theorem}[\cite{krygin}] \label{th:isotop.1}
If $\gamma_0$ and $\gamma_1$ are as in Theorem $1$, then there exists a volume-preserving diffeotopy $h \in C^\infty([0,1] \times \Omega;\Omega)$ such that $\partial_{t} h$ is compactly supported in $(0,1) \times \Omega$, $h(0,\gamma_{0})=\gamma_0$ and $h(1,\gamma_{0})=\gamma_1$.
\end{theorem}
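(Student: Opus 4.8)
The plan is to reduce the statement to the existence of an isotopy between $\gamma_0$ and $\gamma_1$ that can be realized by an ambient diffeotopy, and then to upgrade that ambient diffeotopy to a volume-preserving one. First I would invoke the hypothesis \eqref{CondGamma1}: since $\gamma_0$ and $\gamma_1$ are isotopic in $\Omega$, the isotopy extension theorem (see \cite{Hirsch}) produces a smooth ambient diffeotopy $g \in C^\infty([0,1] \times \Omega; \Omega)$ with $g(0,\cdot) = \mathrm{id}$, $g(1,\gamma_0) = \gamma_1$, and $\partial_t g$ compactly supported in $(0,1)\times\Omega$ — the latter by reparameterizing the time variable so that $g$ is constant near $t=0$ and $t=1$ and by the fact that the isotopy can be taken to move only a compact neighborhood of the surfaces. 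The issue is that $g$ need not be volume-preserving.

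Next I would correct $g$ to a volume-preserving diffeotopy without altering its effect on $\gamma_0$. Set $\gamma_t := g(t,\gamma_0)$, a smooth isotopy of Jordan surfaces, and let $\Omega_t := \mathrm{int}(\gamma_t)$ be the enclosed region. The pulled-back volume form $g(t,\cdot)^* (dx)$ is a smooth family of volume forms on $\Omega$; by \eqref{CondGamma2} the form $dx$ assigns equal volumes to $\Omega_0$ and $\Omega_1$, and more importantly I want, for each $t$, a diffeomorphism $\psi_t$ of $\Omega$ fixing $\gamma_t$ (so fixing $\Omega_t$ setwise) that rescales the volume on each side to match $dx$. This is exactly the relative/parametric version of Moser's theorem (Dacorogna–Moser): given two volume forms on a manifold with boundary that agree in total mass on $\Omega_t$ and on $\Omega \setminus \overline{\Omega_t}$ separately, there is a diffeomorphism isotopic to the identity, fixing $\gamma_t$, pushing one to the other; and it depends smoothly on parameters. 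Here the "two volume forms" are $g(t,\cdot)_*(dx)$ and $dx$; their total masses on $\Omega_t$ agree because $|\Omega_t| = |g(t,\Omega_0)| $ measured with $g(t,\cdot)_*(dx)$ equals $|\Omega_0|$, and similarly on the complement since $|\Omega|$ is fixed. Composing, $h(t,\cdot) := \psi_t \circ g(t,\cdot)$ is volume-preserving, still sends $\gamma_0$ to $\gamma_t$ (because $\psi_t$ fixes $\gamma_t$ pointwise, or at least setwise up to reparameterization), and with a further reparameterization in $t$ keeps $\partial_t h$ compactly supported in $(0,1)\times\Omega$.

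The main obstacle is the parametric Moser step with the boundary constraint: one must ensure the volume-equalizing diffeomorphism $\psi_t$ can be chosen to fix the moving surface $\gamma_t$ (so that the composition still carries $\gamma_0$ onto $\gamma_1$ exactly, up to reparameterization) and to depend smoothly on $t$ with controlled support near $t=0,1$. This requires running Moser's deformation argument separately on $\overline{\Omega_t}$ and on $\overline{\Omega \setminus \Omega_t}$ — each a compact manifold with boundary $\gamma_t$ — solving the relevant divergence equation $\div v_t = f_t$ with $v_t$ tangent to $\gamma_t$, and gluing; the contractibility of $\gamma_0$, $\gamma_1$ in $\Omega$ guarantees the relevant de Rham obstructions vanish so these equations are solvable. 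Handling the smooth dependence and the compact support of $\partial_t h$ is then a matter of careful bookkeeping and time-reparameterization rather than a new idea. For the actual proof one simply cites \cite{krygin}, where this construction is carried out.
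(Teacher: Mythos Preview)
The paper does not prove this theorem at all: it is stated with attribution to Krygin \cite{krygin} and used as a black box. Your final sentence (``For the actual proof one simply cites \cite{krygin}'') is therefore exactly what the paper does, and on that level your proposal matches.

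That said, the sketch you offer as a plan for Krygin's argument has a genuine gap in the Moser step. You want $\psi_t$ to fix the moving surface $\gamma_t$ and to push $g(t,\cdot)_*(dx)$ to $dx$ separately on $\Omega_t$ and on its complement. For Moser's theorem to apply on $\Omega_t$ you need the two forms to have equal total mass there, i.e.
\[
\int_{\Omega_t} g(t,\cdot)_*(dx) \;=\; \int_{\Omega_t} dx,
\qquad\text{that is,}\qquad
|\Omega_0| \;=\; |\Omega_t|.
\]
Your justification (``$|\Omega_t|$ measured with $g(t,\cdot)_*(dx)$ equals $|\Omega_0|$'') establishes only the left-hand side; it says nothing about $|\Omega_t|$ in the standard measure. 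The hypothesis \eqref{CondGamma2} gives $|\Omega_0|=|\Omega_1|$, but for intermediate $t$ the ambient diffeotopy $g$ produced by the isotopy extension theorem has no reason to keep $|\Omega_t|$ constant. So the separated Moser argument, as written, fails for $0<t<1$.

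The standard repair is to insert a preliminary step: before running Moser on each side, first modify the isotopy so that $t\mapsto|\Omega_t|$ is constant. One way is to push $\gamma_t$ along its normal by a smooth amount $s(t)$ chosen so that the enclosed volume equals $|\Omega_0|$ (possible by the implicit function theorem since the derivative of enclosed volume with respect to normal displacement is the surface area, hence nonzero); since $|\Omega_0|=|\Omega_1|$ one gets $s(0)=s(1)=0$ and the endpoints are unchanged. After this correction your two-sided Moser argument goes through. This missing normalization is not mere bookkeeping---without it the obstruction to solving $\operatorname{div} v_t = f_t$ with $v_t$ tangent to $\gamma_t$ does not vanish.
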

A direct consequence is that the smooth vector field
\begin{equation} \label{DefX}
X(t,x):= \partial_{t} h (t,h^{-1}(x)),
\end{equation}
is compactly supported in $(0,1) \times \Omega$ and satisfies
\begin{equation*}
\phi^{X}(1,0,\gamma_{0}) = \gamma_{1},
\end{equation*}
and 
\begin{equation*}
\div X =0 \ \text{ in } \ (0,1) \times \Omega.
\end{equation*}
\subsection{Moving the fluid zone by potential flows}
In this subsection we prove the following.
\begin{proposition} \label{th:3}
Let $\gamma_0$ be a $C^\infty$ contractible two-sphere embedded in $\Omega$ and consider $X\in C^0([0,1]; C^{\infty}(\overline{\Omega};\R^3))$ a solenoidal vector field such that 
\begin{equation}
\forall t\in [0,1],\  \phi^X(t,0,\gamma_0)\subset \Omega,
\end{equation}
and let 
\begin{equation*}
\gamma_1=\phi^X(1,0,\gamma_0).
\end{equation*}
For any $\eps>0$ and $k\in \N$ there exists $\theta\in C^{\infty}_0((0,1) \times \overline{\Omega};\R)$ such that 
\begin{eqnarray}
\label{eq:thetaharm}
\forall t\in [0,1],\ \Delta_x \theta=0 \ \text{ in } \ \Omega, \\
\label{eq:4.9}
\dfrac{\partial \theta}{\partial n}=0 \ \text{ on } \ [0,1] \times (\partial \Omega\setminus\Gamma), \\
\label{eq:thetarestedansomega}
\forall t\in [0,1], \ \phi^{\nabla \theta}(t,0,\gamma_0) \subset \Omega, \\
\label{eq:thetafaitletravail}
\| \phi^{\nabla\theta}(1,0,\gamma_0)-\gamma_1\|_{C^k(\S^2)} \leq \eps, 
\end{eqnarray}
up to reparameterization.
\end{proposition}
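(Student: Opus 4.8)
The plan is to approximate the flow of the given solenoidal field $X$ by a concatenation of many small potential flows, using a ``chattering''-type argument adapted to the Lagrangian setting. The key local fact is the following: since $X(t,\cdot)$ is divergence-free and the region $\phi^X(t,0,\gamma_0)$ stays at positive distance from $\partial\Omega$ (indeed compactly inside $\Omega$ uniformly in $t$ by the assumed continuity), on a fixed neighborhood $\mathcal{O}$ of the tube $\bigcup_{t\in[0,1]}\phi^X(t,0,\gamma_0)$ one can write, for each fixed $t$, a Helmholtz-type decomposition $X(t,\cdot)=\nabla q(t,\cdot)+\text{(remainder)}$ on $\mathcal{O}$; but more to the point, one approximates $X(t,\cdot)$ on $\mathcal{O}$ by gradients of harmonic functions on all of $\Omega$. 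The tool for this is a Runge–Lax type approximation theorem for harmonic functions: if $K\subset\Omega$ is compact with $\Omega\setminus K$ connected (up to $\Gamma$), then any function harmonic on a neighborhood of $K$ can be approximated in $C^k(K)$ by a function harmonic on $\Omega$ with vanishing normal derivative on $\partial\Omega\setminus\Gamma$. This last Neumann-type Runge approximation is proved exactly as in \cite{00Gla}: one uses duality, the Hahn–Banach theorem, and unique continuation for harmonic functions, the role of $\Gamma$ being to carry the ``escaping'' mass.

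First I would fix a large integer $N$ and partition $[0,1]$ into intervals $I_j=[t_j,t_{j+1}]$ of length $1/N$. On each $I_j$, I freeze the time variable and consider the autonomous field $X(t_j,\cdot)$ on the neighborhood $\mathcal{O}$; there $X(t_j,\cdot)$ is divergence-free, hence (locally, on the relevant contractible region) it need not be a gradient, so a single potential flow cannot reproduce it. This is where a second chattering layer enters: any divergence-free field on a ball can be approximated, in the sense of its \emph{time-one flow}, by a composition of finitely many gradient flows — this is a quantitative version of the fact that the group generated by exact (gradient) flows is dense, and is precisely the kind of statement underlying Krygin's and Arnold's constructions. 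Concretely, I would invoke the density of products of ``shear''/gradient flows in the volume-preserving diffeomorphism group, together with the fact that gradient vector fields on $\mathcal{O}$ are approximated in $C^k(\overline{\mathcal O})$ by $\nabla\theta$ with $\theta$ harmonic on $\Omega$ and $\partial\theta/\partial n=0$ on $\partial\Omega\setminus\Gamma$. Telescoping the flow over the $N$ subintervals and using Gronwall to control the accumulation of errors in $\phi(\cdot,0,\gamma_0)$, one obtains a piecewise-in-time harmonic potential $\tilde\theta$ with $\|\phi^{\nabla\tilde\theta}(1,0,\gamma_0)-\gamma_1\|_{C^k(\S^2)}$ as small as desired, while keeping $\phi^{\nabla\tilde\theta}(t,0,\gamma_0)\subset\Omega$ for all $t$ (here one uses that the $X$-tube is compactly inside $\Omega$, leaving room for the small perturbation).

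Finally I would smooth in time: the construction above gives a $\theta$ that is only piecewise continuous in $t$, so I mollify in $t$ and cut off near $t=0$ and $t=1$, which is harmless because $X$ itself can be taken compactly supported in time (or one simply reparametrizes time so that the motion is trivial near the endpoints). The mollification in $t$ preserves harmonicity in $x$ and the Neumann condition on $\partial\Omega\setminus\Gamma$ since these are linear constraints, and a standard flow-continuity estimate shows the time-one image of $\gamma_0$ moves by an arbitrarily small $C^k(\S^2)$ amount. The main obstacle, and the technical heart of the argument, is the combination of the two approximation layers with uniform control: one must (i) establish the quantitative Runge-type approximation of harmonic functions on $\Omega$ respecting the $\partial\Omega\setminus\Gamma$ boundary condition in $C^k$ norm, and (ii) control, via Gronwall, the propagation of the many small flow errors so that the final position of $\gamma_0$ is within $\eps$ — the number of pieces $N$ and the per-piece accuracy must be chosen in the right order. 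I expect step (i), the Neumann–Runge approximation with $C^k$ bounds, to be the delicate point, handled by duality plus unique continuation as in the Eulerian controllability papers \cite{93COR,96COR-2,00Gla}.
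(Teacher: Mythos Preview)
Your proposal has a genuine gap at the ``second chattering layer'': you assert that the time-one flow of an arbitrary divergence-free field on a ball can be approximated by a finite composition of gradient flows, calling this ``precisely the kind of statement underlying Krygin's and Arnold's constructions''. It is not. Krygin's theorem produces a volume-preserving diffeotopy between two given configurations; it says nothing about generating such diffeotopies from gradient flows. The claim you need --- that the Lie group generated by flows of harmonic gradients (which are simultaneously curl-free and divergence-free, hence very rigid) acts transitively enough, with quantitative $C^k$ control, to approximate the flow of a generic solenoidal field --- is neither standard nor obvious, and you give no argument for it. Without it the rest of the scheme collapses, since a single harmonic gradient cannot reproduce a field with nontrivial vorticity even locally.

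The paper sidesteps this issue entirely by a different idea that you are missing: one does \emph{not} try to approximate $X$ itself by a gradient. Since the statement is only ``up to reparameterization'', all that matters is the \emph{normal} velocity of the moving surface $\gamma(t)=\phi^X(t,0,\gamma_0)$. So for each $t$ one solves the interior Neumann problem
\[
\Delta\psi=0 \ \text{in}\ \intj(\gamma(t)),\qquad \partial_\nu\psi = X\cdot\nu \ \text{on}\ \gamma(t),
\]
whose gradient $\nabla\psi$ has the same normal component as $X$ on $\gamma(t)$ and hence transports the surface (as a set) identically. The delicate point is then to approximate $\psi$ by a function harmonic on all of $\Omega$ with the right boundary behaviour; for this one needs $\psi$ to extend harmonically to a neighbourhood of $\overline{\intj(\gamma(t))}$, and the paper obtains this via Cauchy--Kowalevsky after first reducing to real-analytic $\gamma_0$ and $X$ (Whitney approximation). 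Only then does a Runge-type harmonic approximation (the paper uses Gardiner's theorem rather than the duality/unique-continuation route you sketch, but either works) produce the global $\theta$. Your plan never isolates the normal-velocity reduction, which is why you are forced into the unproven gradient-flow-density step; and it also does not explain how to obtain a harmonic function defined on an open set \emph{containing} the surface, which is what any Runge argument actually requires.
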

As in \cite{GlHo08} we prove this proposition by assuming first that $\gamma_0$ is an embedded real analytic $2$-sphere and that $X$ is real analytic in $x$.
Then we progressively reduce the assumptions to the framework of Proposition \ref{th:3}.
\subsubsection{Case of an analytic $2$-sphere moved by an analytic isotopy}
The goal of this paragraph is to prove the following proposition.
\begin{proposition}\label{ProStep1}
The conclusions of Proposition \ref{th:3} are satisfied if we assume that $\gamma_0$ is an analytic $2$-sphere ({\it i.e.} $\gamma_{0}$ is the image of $\S^{2}$ by a real-analytic embedding $f_0$) and that $X$ is moreover in $C^{0}([0,1]; C^\omega(\Omega;\R^3))$.
\end{proposition}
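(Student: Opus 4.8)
The plan is to approximate the flow of the analytic, divergence-free vector field $X$ on the set $\gamma_0$ by the flow of a time-dependent potential (gradient of a harmonic function), working time-step by time-step. The starting observation is that, since $\gamma_0$ is analytic and $X$ is divergence-free and analytic in $x$, the image $\gamma_t := \phi^X(t,0,\gamma_0)$ stays analytic and, crucially, stays away from $\partial\Omega$ on the compact time interval $[0,1]$; so there is a fixed open neighborhood of $\bigcup_t \gamma_t$ compactly contained in $\Omega$ in which we need to do the approximation. The idea is to freeze time on a fine subdivision $0=t_0<t_1<\dots<t_N=1$ and, on each subinterval, replace $X(t,\cdot)$ by a harmonic gradient that agrees with $X$ to high order near the current position of the surface. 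The key analytic input is a Runge-type / Lax–Malgrange approximation theorem: a vector field that is divergence-free and curl-free in a neighborhood of the analytic hypersurface $\gamma_{t_i}$ can be approximated in $C^k$ on that neighborhood by a global harmonic gradient $\nabla\theta$ on $\Omega$ satisfying the Neumann condition $\partial\theta/\partial n = 0$ on $\partial\Omega\setminus\Gamma$. Since $\gamma_{t_i}$ is a contractible embedded sphere, its complement in a tubular neighborhood is simply connected enough that one can write $X(t_i,\cdot) = \nabla\psi_i$ locally with $\psi_i$ harmonic (using $\div X=0$ to get harmonicity and local exactness of the associated closed form); then Runge approximation on $\Omega$ (with the control region $\Gamma$ providing the ``escape to the boundary'' needed for the approximation theorem) yields the desired global $\theta_i$.

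Concretely, the steps I would carry out are: (1) Establish the uniform separation $d(\gamma_t,\partial\Omega)\ge \delta>0$ for $t\in[0,1]$ and fix a neighborhood $\mathcal{U}\Subset\Omega$ containing all $\gamma_t$; record the analyticity of $\gamma_t$. (2) For a frozen time $s$, near the analytic sphere $\gamma_s$ solve $\div V = 0$, $\curl V = 0$, $V$ close to $X(s,\cdot)$: since $X(s,\cdot)$ is divergence-free, it is locally $\curl$ of something, but we actually want to replace it by a curl-free field with the same flow effect to first order — so instead write, on a thin tubular neighborhood of $\gamma_s$ (which retracts onto $\S^2$, hence has the right topology), the Hodge-type decomposition and extract the gradient part; more simply, use that the transport of $\gamma_s$ depends only on the normal component of the velocity on $\gamma_s$, so it suffices to find $\theta_s$ harmonic with $\partial_n\theta_s$ prescribed on $\gamma_s$ equal to $X(s,\cdot)\cdot\nu$, which is a Neumann problem on the analytic surface solvable with the zero-average (volume-preservation) compatibility coming from $\div X = 0$. (3) Invoke the Runge/Lax–Malgrange theorem for the Laplacian: approximate this local harmonic function, in $C^k$ of a neighborhood of $\gamma_s$, by a harmonic function on all of $\Omega$ with vanishing normal derivative on $\partial\Omega\setminus\Gamma$ — here the hypothesis that $\gamma_s$ is contractible in $\Omega$ and that $\Gamma\ne\emptyset$ is exactly what makes $\Omega\setminus(\text{nbhd of }\gamma_s)$ have no compact component avoiding $\Gamma$, which is the hypothesis of the approximation theorem. (4) Paste these frozen-time harmonic fields together in time using a partition of unity / smooth cutoff in $t$ subordinate to the subdivision, arranging that the resulting $\theta(t,x)$ is compactly supported in $(0,1)\times\Omega$ and in $C^\infty_0$; the error on each subinterval is controlled by a Gronwall estimate for the ODE $\dot x = \nabla\theta(t,x)$ versus $\dot x = X(t,x)$, and the errors accumulate linearly in $N$ but each is made $\le \eps/N$, so the total stays $\le \eps$ while the flow stays in $\mathcal{U}\Subset\Omega$, giving \eqref{eq:thetaharm}--\eqref{eq:thetafaitletravail}.

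The main obstacle I expect is step (3): the global approximation of a locally defined harmonic function by a globally harmonic one on $\Omega$ respecting the Neumann condition on $\partial\Omega\setminus\Gamma$. This is where the analyticity of $\gamma_s$ is needed (to have a genuine neighborhood on which the local harmonic function is defined and to apply a Runge-type theorem with good convergence), where the topological hypotheses (contractibility of $\gamma_s$, nonemptiness of $\Gamma$) enter to guarantee the ``no obstruction'' condition, and where some care is required because we have a mixed Neumann condition on part of the boundary rather than working on a closed manifold or on all of $\R^3$. A clean way to handle the boundary condition is to double the domain across $\partial\Omega\setminus\Gamma$, or to extend $\Omega$ through $\Gamma$ into a larger domain $\widetilde\Omega$ and apply the interior approximation theorem there, then restrict; the Neumann condition on $\partial\Omega\setminus\Gamma$ is preserved by working with the even reflection. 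A secondary technical point is the time-pasting in step (4): one must ensure the patched field remains exactly divergence-free (automatic, as each piece is a harmonic gradient and the time-cutoffs multiply the spatial field by scalars) and that the flow comparison estimate survives the switching between subintervals, which is routine Gronwall bookkeeping once the spatial $C^1$ (indeed $C^{k}$) approximation is in hand. Once Proposition \ref{ProStep1} is proved in this analytic setting, the passage to general smooth $\gamma_0$ and $X$ (the remaining content of Proposition \ref{th:3}) proceeds, as the authors indicate, by approximating the smooth data by analytic data and composing the approximations.
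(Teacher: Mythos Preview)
Your proposal is essentially correct and follows the same architecture as the paper: solve the interior Neumann problem $\Delta\psi=0$, $\partial_\nu\psi = X\cdot\nu$ on $\gamma(t)$; extend $\psi$ harmonically across the analytic surface; apply a Runge-type harmonic approximation to globalize; patch in time with a partition of unity; conclude by Gronwall. Two points are worth sharpening against the paper's execution.

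First, the extension of $\psi$ to a genuine neighborhood of $\overline{\intj(\gamma(t))}$ is the content of the paper's Lemma~\ref{prop:etendrepsi}, and it is done by a Cauchy--Kowalevsky argument (Morrey's Theorem~\ref{th:cauchy-kow}) after flattening the analytic boundary; you correctly flag that analyticity of $\gamma(t)$ is what makes this possible, but you should name the mechanism, since elliptic regularity up to the boundary does not by itself give analytic continuation across it.

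Second, your proposed handling of the Neumann condition on $\partial\Omega\setminus\Gamma$ by reflection/doubling is different from, and more delicate than, what the paper does: $\partial\Omega\setminus\Gamma$ is a surface with boundary $\partial\Gamma$, so even reflection is awkward. The paper instead enlarges the compact set to $K_i = V_{\eta/2}[\intj(\gamma(t_i))] \cup V_{\eta/2}(\partial\Omega\setminus\Gamma)$, applies the harmonic Runge theorem (Theorem~\ref{th:approxharm1}) in $\R^3$ minus one point per complementary component (placed outside $\overline{\Omega}$, which is possible precisely because $\gamma$ is contractible and $\Gamma\neq\emptyset$), approximating $\psi_i$ near $\gamma(t_i)$ and $0$ near $\partial\Omega\setminus\Gamma$ simultaneously. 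The small residual normal derivative on $\partial\Omega\setminus\Gamma$ is then removed by subtracting the solution of a global Neumann problem on $\Omega$, which is harmlessly small by elliptic estimates. This is cleaner and avoids any geometric construction near $\partial\Gamma$.
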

It is clear that in that case, $\phi^X$ is volume preserving real analytic isotopy between $\gamma_0$ and $\gamma_1$. 
The first step to establish Proposition \ref{ProStep1} is the following.
\begin{lemma}
\label{prop:etendrepsi}
Let $t\mapsto \gamma(t)$ be a $C^{0}([0,1];C^\omega(\S^2))$ family of contractible $2$-spheres in $\Omega$ and $X\in C([0,1];C^\omega(\overline{\Omega}))$ be a family of vector field such that 
\begin{equation} \label{CondXNu}
\int_{\gamma(t)}X \cdot \nu \, d\sigma=0,
\end{equation}
then there exists  $\eta>0$ and $\psi\in C^{0}([0,1];C^\infty(V_\eta[\intj(\gamma(t))];\R))$ such that
\begin{eqnarray}
\label{eq:psiharm}
\forall t\in [0,1], \ \Delta_x\psi=0 \ \text{ in } \ {V}_\eta[\intj(\gamma(t))], \\
\label{eq:dernormpsiegalX}
\forall t\in [0,1], \ \frac{\partial \psi}{\partial \nu}=X \cdot \nu \ \text{ on } \ \gamma(t).
\end{eqnarray}
\end{lemma}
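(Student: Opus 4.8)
The plan is to realize $\psi(t,\cdot)$ as the single-layer-type potential solving a Neumann problem on the fixed-volume region $\intj(\gamma(t))$, and then to extend it harmonically a little beyond $\gamma(t)$ using analyticity. First I would observe that the compatibility condition for the interior Neumann problem $\Delta \psi(t,\cdot)=0$ in $\intj(\gamma(t))$, $\partial\psi/\partial\nu = X\cdot\nu$ on $\gamma(t)$, is exactly \eqref{CondXNu}; by the divergence theorem $\int_{\gamma(t)} X\cdot\nu\, d\sigma = \int_{\intj(\gamma(t))}\div X\, dx = 0$ since $X$ is solenoidal. Hence for each $t$ there is a solution $\psi(t,\cdot)\in C^\infty(\overline{\intj(\gamma(t))})$, unique up to an additive constant, which we normalize (say, by prescribing $\int_{\gamma(t)}\psi\, d\sigma = 0$). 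Standard elliptic regularity, together with the analytic dependence of the domain on $t$ (the boundary $\gamma(t)$ moves in $C^0([0,1];C^\omega(\S^2))$), gives that $t\mapsto\psi(t,\cdot)$ is $C^0$ into $C^\infty$ of the (moving) closure; one convenient way to make this rigorous is to pull back by a $t$-dependent diffeomorphism straightening $\gamma(t)$ to a fixed sphere, turning the problem into an elliptic equation with $C^0$-in-$t$ coefficients on a fixed domain, and invoke continuous dependence of solutions on coefficients.

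Next I would push $\psi$ across $\gamma(t)$. Since $\gamma(t)$ is a real-analytic hypersurface and both the Dirichlet trace $\psi|_{\gamma(t)}$ and the Neumann trace $\partial\psi/\partial\nu = X\cdot\nu$ on $\gamma(t)$ are real-analytic (the former by analytic hypoellipticity of $\Delta$ up to the analytic boundary with analytic data, the latter because $X$ is analytic), the Cauchy–Kovalevskaya theorem produces, near each point of $\gamma(t)$, a local harmonic extension of $\psi(t,\cdot)$ to a full neighborhood; these local solutions agree on overlaps by uniqueness of the analytic Cauchy problem for $\Delta$, so they glue to a harmonic $\psi(t,\cdot)$ on $V_{\eta(t)}[\intj(\gamma(t))]$ for some $\eta(t)>0$. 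By compactness of $[0,1]$ and the continuity of the construction in $t$, one can choose a single $\eta>0$ valid for all $t$, and retain $C^0$ dependence in $t$; a partition-of-unity argument, or simply uniform bounds on the radius of convergence coming from uniform analytic bounds on $\gamma(t)$ and $X$, handles this. This yields $\psi\in C^0([0,1];C^\infty(V_\eta[\intj(\gamma(t))]))$ satisfying \eqref{eq:psiharm}–\eqref{eq:dernormpsiegalX}.

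The main obstacle I anticipate is not the existence of the harmonic extension at a fixed time — that is classical Cauchy–Kovalevskaya plus uniqueness — but the uniformity and regularity in the parameter $t$: one must ensure the thickness $\eta$ of the neighborhood does not degenerate as $t$ varies and that the assembled family depends continuously on $t$ in the stated topology. This requires quantitative control, uniform in $t$, on the analytic norms of the data (the embeddings defining $\gamma(t)$ and the field $X$), which is available because $t\mapsto\gamma(t)$ and $t\mapsto X(t)$ are continuous into the respective analytic-function spaces on the compact interval $[0,1]$; combined with quantitative Cauchy–Kovalevskaya estimates (radius of convergence bounded below in terms of these norms) this closes the argument. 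A secondary, more bookkeeping-type point is the choice of normalization of the additive constant so that $\psi$ is genuinely continuous in $t$ rather than merely continuous up to a $t$-dependent constant; fixing it by an integral condition over $\gamma(t)$, as above, makes the dependence continuous.
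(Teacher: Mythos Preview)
Your plan is correct and follows the same overall strategy as the paper: solve the interior Neumann problem on $\intj(\gamma(t))$, extend the solution analytically across $\gamma(t)$ via a Cauchy--Kovalevskaya-type argument, and use compactness of $[0,1]$ together with uniform analytic bounds on the data to get a single $\eta>0$ and $C^{0}$ dependence in $t$. The one noteworthy difference is in how the extension step is executed. You propose to invoke analytic regularity up to the boundary (so that the Dirichlet trace $\psi|_{\gamma(t)}$ is real-analytic), and then feed both Cauchy data into the classical Cauchy--Kovalevskaya theorem. The paper instead straightens $\gamma(t)$ to $\{x_{3}=0\}$ by an analytic chart normalized so that $d\phi(\nu)$ is normal to $\{x_{3}=0\}$, and applies Morrey's one-sided analytic extension theorem directly to $g:=\partial_{x_{3}}(\psi\circ\phi^{-1})$, which satisfies a second-order elliptic equation with analytic coefficients and has \emph{known} analytic Dirichlet data (coming from $X\cdot\nu$) on $\{x_{3}=0\}$; integrating the extended $g$ along the normal then recovers $\psi$. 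The paper's route avoids appealing to analytic boundary regularity as a black box, at the price of a small computation; your route is shorter to state but relies on a somewhat heavier cited result. Both are valid.

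One minor inaccuracy: in the lemma as stated, $X$ is not assumed divergence-free, so your divergence-theorem justification of \eqref{CondXNu} is out of place --- \eqref{CondXNu} is simply a hypothesis here (it does follow from $\div X=0$ in the application, but not in the lemma itself).
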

In other words, Lemma \ref{prop:etendrepsi} expresses that the solution of the Neumann system
\begin{equation} \label{SysNeumann}
\left\{ \begin{array}{l}
 \Delta_x\psi=0 \ \text{ in } \ \intj(\gamma(t)), \\
 \frac{\partial \psi}{\partial \nu}=X \cdot \nu \ \text{ on } \ \gamma(t),
\end{array} \right.
\end{equation}
can be extended across the boundary $\gamma(t)$ uniformly in $t$. \par
\begin{proof}
An equivalent lemma in dimension $2$ is given in \cite{GlHo08}. 
By means of real analytical local charts (see e.g. the analytic inverse theorem in \cite{92KRAPRA}), $\gamma(t)$ is locally mapped to the plan $\{ x_3=0 \}$ by some $\phi$ with $\phi(\intj(\gamma(t)))\subset \{ x_3>0 \}$. Moreover, we can require that 
\begin{equation*}
d \phi (\nu) \text{ is normal to } x_{3}=0 \text{ on } \{x_{3} =0 \}.
\end{equation*}
To obtain this property, consider
\begin{equation*}
\hat{\phi} : (x_{1},x_{2},x_{3}) \mapsto (x_{1},x_{2},0) + x_{3} d \phi_{\phi^{-1}(x_{1},x_{2},0)} (\nu). 
\end{equation*}
Then $\hat{\phi}$ is analytic and invertible (locally) in a neighborhood of $\{ x_{3} = 0\}$. Now, replace $\phi$ by $\hat{\phi}^{-1} \circ \phi$ to obtain the requirement. \par
Now call
\begin{equation*}
g:=\partial_{x_3}(\psi\circ\phi^{-1});
\end{equation*}
it satisfies 
\begin{equation*}
a \cdot \nabla^2g + b \cdot \nabla g + cg=0,
\end{equation*}
with a real-analytic Dirichlet boundary condition given on $x_3=0$ and analytic coefficients $a$, $b$ and $c$.
We use the following result of Cauchy-Kowalevsky type (see e.g. Morrey \cite[Theorem 5.7.1']{Mo08})
\begin{theorem}\label{th:cauchy-kow}
Let $f$, $a$, $b$ and $c$ be real analytic functions on
$$
\overline{G_R}:=\overline{B}_{\R^N}(0,R) \cap \{(x_1,...,x_N),\, x_N\geq 0\},
$$
and $y\in H^2(G_R)$  satisfying 
\begin{equation*}
a \cdot \nabla^2y + b \cdot \nabla y + c=f \ \text{ in } \ G_R
\ \text{ and } \ 
y=0 \ \text{ on } \ x_N=0.
\end{equation*}
Assume that for some constant $A$ and $L$ we have 
\begin{equation} \label{EstAnalytique}
|\nabla^p a(x)|, \ |\nabla^p b(x)|, \ |\nabla^p c(x)|, \ |\nabla^p f(x)| \leq L A^{|p|},
\end{equation}
for any multi-index $p$. There exists then $R'<R$ depending only on $N$, $A$, $L$ and $R$ such that $y$ may be extended analytically on $B_{\mathbb R^N}(0,r)$ for any $r<R'$.
\end{theorem}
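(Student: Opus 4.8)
The plan is to establish the analytic extension in three stages — interior analyticity, analyticity up to the flat boundary $\{x_N=0\}$, and extension across it — following the scheme of Morrey's proof in \cite[Theorem 5.7.1']{Mo08}. Throughout one uses that the principal part $a\cdot\nabla^2$ is elliptic (in the application to Lemma~\ref{prop:etendrepsi} it is the pullback of the Laplacian under an analytic chart); in particular $\{x_N=0\}$ is non-characteristic, i.e.\ $a_{NN}$ is bounded away from $0$, which is what makes the Cauchy--Kovalevskaya mechanism work near the boundary. For the first stage, the classical elliptic regularity theory for equations with analytic coefficients gives that $y$ is real-analytic in the open set $G_R$, with Cauchy-type bounds $\sup_K|\nabla^p y|\le L_K A_K^{|p|}\,|p|!$ on any compact $K\subset G_R$, where $L_K,A_K$ depend only on $N$, $A$, $L$ and $\mathrm{dist}(K,\partial G_R)$. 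I would obtain this by the standard nested-ball induction on $|p|$: differentiate the equation, apply interior elliptic estimates on a chain of shrinking balls, and control the growth of the constants by a combinatorial argument.

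The second stage is the main obstacle: analyticity up to the flat portion of the boundary. Here the homogeneous Dirichlet datum $y=0$ on $\{x_N=0\}$ is essential, because it is analytic and is preserved by tangential difference quotients in the directions $x_1,\dots,x_{N-1}$. One first bounds all purely tangential derivatives $\partial_{x'}^\beta y$ up to the boundary by an induction parallel to the interior one but using boundary elliptic estimates rather than interior ones. One then trades normal derivatives for tangential ones: since $a_{NN}\neq 0$, the equation expresses $\partial_{x_N}^2 y$ in terms of $y$, its first derivatives and its tangentially-differentiated second derivatives, and differentiating the equation repeatedly yields, by a second induction on the number of normal derivatives, uniform estimates $\sup_{\overline{G_{R'}}}|\nabla^p y|\le L' (A')^{|p|}\,|p|!$ for any fixed $R'<R$, with $L',A'$ depending only on $N,A,L,R,R'$. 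The delicate point — the technical heart of the theorem — is the bookkeeping of constants through this double induction so that the resulting $A'$ does not depend on $p$.

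For the third stage, given the uniform bounds just obtained, for every $x_0\in B_{\R^{N-1}}(0,R')\times\{0\}$ the Taylor series of $y$ at $x_0$ converges on a ball $B(x_0,\rho_0)$ with $\rho_0$ of order $1/(NA')$, independent of $x_0$, and defines there a real-analytic function coinciding with $y$ where $x_N\ge 0$. Two such local extensions agree on the overlap of their domains, since they agree on the nonempty open set where $x_N>0$ and hence everywhere by the identity theorem; patching them together and combining with the interior analyticity of the first stage produces a real-analytic function on $B_{\R^N}(0,r)$ extending $y$, for every $r<R'$, after shrinking $R'$ slightly so as to absorb $\rho_0$ and to stay away from the curved part of $\partial G_R$ and from the edge $\{|x|=R,\ x_N=0\}$; the resulting $R'$ depends only on $N,A,L,R$. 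Alternatively, once $y$ is known to be analytic up to $\{x_N=0\}$, its analytic Cauchy data there feeds into the Cauchy--Kovalevskaya theorem, whose unique analytic solution agrees with $y$ on the side $x_N\ge 0$ and hence is the desired two-sided extension.
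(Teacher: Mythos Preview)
The paper does not prove this theorem: it is quoted verbatim as a known result of Cauchy--Kovalevskaya type, with the reference ``see e.g.\ Morrey \cite[Theorem 5.7.1']{Mo08}'', and is used as a black box inside the proof of Lemma~\ref{prop:etendrepsi}. So there is no proof in the paper to compare your proposal against.

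That said, your sketch is a faithful outline of Morrey's argument: interior analyticity via nested-ball elliptic estimates, tangential-then-normal induction to obtain analytic bounds up to the flat boundary using the Dirichlet condition and non-characteristicity ($a_{NN}\neq 0$), and then extension across $\{x_N=0\}$ either by summing the Taylor series with the uniform Cauchy bounds or by invoking Cauchy--Kovalevskaya on the analytic Cauchy data. One remark: the theorem as stated in the paper omits the ellipticity hypothesis on $a$, and you are right to note that the argument needs it (and that in the application the operator is the pullback of $\Delta$ by an analytic chart, so this is satisfied). Your identification of the double induction on tangential and normal derivatives as the technical core, with the dependence of $R'$ only on $N,A,L,R$, is exactly the point Morrey's proof establishes.
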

Thus for each $x \in \gamma(t)$, $\nabla_x\psi \cdot \nu$ can be analytically extended on a neighborhood $U_x$ of $x$ across $\gamma(t)$. Integrating $\nabla\psi \cdot \nu$ along $\nu$ we deduce that $\psi$ is real analytic and by unique continuation its extension is also harmonic on $U_x$. Moreover, using the continuity of $\gamma$ in the variable $t$ from $[0,1]$ to $C^{\omega}(\S^{2})$\hskip1pt\footnote{We refer to \cite{92KRAPRA} for a definition of the topology on this space.} (so that the coefficients satisfy \eqref{EstAnalytique} uniformly in time), we can find $U_{x}$ so that $\psi$ is analytically extended on in $U_x$ for each $s$ in some neighborhood of $t$. Using the compactness of $\cup_t\{(t,\gamma(t))\}$ we see that we can choose $\eta>0$ uniform in $t$, such that for all $t$, $\psi$ can be analytically (and hence harmonically) extended in $V_{\eta}(\gamma(t))$. Lemma \ref{prop:etendrepsi} follows.
\end{proof}
\ \par
\begin{proof}[Back to the proof of Proposition \ref{ProStep1}]
We take $\gamma(t)=\phi^X(t,0,\gamma_0)$. Due to the regularity of $X$, $\gamma(t)$ is analytic for any $t$, and applying Lemma \ref{prop:etendrepsi} we deduce a function $\psi$. 
Reducing $\eta>0$ given by Lemma \ref{prop:etendrepsi} if necessary, we may assume that $V_{\eta}(\gamma(t))$ does not meet $\partial \Omega$. \par
By compactness of $[0,1]$, for a given $\eps>0$ we can choose $0\leq t_1<\cdots<t_N\leq 1$ and $\delta_1, \dots, \delta_N$ such that 
\begin{gather}
\nonumber
[0,1]\subset\cup_{i=1}^N (t_i-\delta_{i},t_i+\delta_{i}), \\
\nonumber
\forall t \in [t_i-\delta_i,t_i+\delta_i], \ \ \gamma(t) \subset V_{\eta/2}(\gamma(t_i)), \\
\label{eq:continuiteunifX}
\forall s, t \in [t_i-\delta_i,t_i+\delta_i], \ \ \| \psi(s,\cdot) - \psi(t,\cdot) \|_{C^k(\overline{V_{\eta}}(\gamma(t_i)))} \leq \eps.
\end{gather}
For each $i=1, \dots, N$, we consider
\begin{gather*}
K_i:=V_{\eta/2} [\intj(\gamma(t_i))] \cup V_{\eta/2}(\partial \Omega \setminus \Gamma), \\
\psi_{i}:= \psi(t_{i},\cdot).
\end{gather*}
Reducing $\eta$ again if necessary, we may assume that the connected component of $\Omega \setminus K_{i}$ in $\R^{3} \setminus K_{i}$ meets $\R^{3} \setminus \overline{\Omega}$. This is possible since the connected component of $\Omega \setminus [\intj(\gamma(t_i)) \cup (\partial \Omega \setminus \Gamma)]$ in $\R^{3} \setminus K_{i}$ does meet $\R^{3} \setminus \overline{\Omega}$, since $\Gamma \not = \emptyset$. It follows that each connected component of $\R^{3} \setminus K_{i}$ meets $\R^{3} \setminus \overline{\Omega}$. \par
Now we use the following harmonic approximation theorem (see \cite[Theorem 1.7]{GARD}):
\begin{theorem}\label{th:approxharm1}
Let ${\mathcal O}$ be an open set in $\R^{N}$ and let $K$ be a compact set in $\R^{N}$
such that that ${\mathcal O}^{*} \setminus K$ is connected, where ${\mathcal O}^{*}$ is the Alexandroff compactification of ${\mathcal O}$. Then, for each  function $u$ which is harmonic on an open set
containing $K$ and each $\varepsilon>0$, there is a harmonic function $v$ in ${\mathcal O}$
such that $\|v - u\|_{\infty} < \varepsilon$ on $K$.
\end{theorem}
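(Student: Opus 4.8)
The plan is to prove this by the classical Runge\nobreakdash-type duality argument from potential theory. First I would reformulate the conclusion functional\nobreakdash-analytically: writing $\mathcal{H}(\mathcal{O})$ for the subspace of $C(K)$ consisting of restrictions to $K$ of functions harmonic on $\mathcal{O}$, the assertion is that $\overline{\mathcal{H}(\mathcal{O})}$ contains $u|_{K}$ whenever $u$ is harmonic on a neighbourhood of $K$. By the Hahn--Banach theorem together with the Riesz representation of $C(K)^{*}$, it suffices to prove the following: if $\mu$ is a finite complex Borel measure carried by $K$ with $\int v\, d\mu = 0$ for every $v$ harmonic on $\mathcal{O}$, then also $\int u\, d\mu = 0$ for every $u$ harmonic on some open neighbourhood of $K$.

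The main tool will be the Newtonian potential of $\mu$. Let $E$ be the fundamental solution of the Laplacian in $\R^{N}$ (so $E(x)=c_N|x|^{2-N}$ if $N\geq 3$ and $E(x)=(2\pi)^{-1}\log|x|$ if $N=2$) and set $U_{\mu}(x):=\int_{K}E(x-y)\,d\mu(y)$. Then $\Delta U_{\mu}=\mu$ in $\mathcal{D}'(\R^{N})$, and since $y\mapsto E(x-y)$ is real\nobreakdash-analytic away from $y=x$, the function $U_{\mu}$ is real\nobreakdash-analytic, in particular harmonic, on $\R^{N}\setminus K$. I would then record two vanishing facts. First, if $x\notin\mathcal{O}$ then $y\mapsto E(x-y)$ is harmonic on $\mathcal{O}$, so the annihilation hypothesis forces $U_{\mu}(x)=0$; hence $U_{\mu}\equiv 0$ on $\R^{N}\setminus\mathcal{O}$. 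Second, expanding $E(x-y)$ for large $|x|$ in terms of solid harmonics, i.e. harmonic polynomials in $y$ — each of which is harmonic on $\R^{N}\supseteq\mathcal{O}$, hence $\mu$\nobreakdash-null (for $N=2$ one first tests against the constant $1$ to see that $\mu$ has total mass zero, killing the logarithmic term) — shows that $U_{\mu}\equiv 0$ on a neighbourhood of infinity in $\R^{N}$.

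The heart of the argument is to propagate the vanishing of $U_{\mu}$ to all of $\R^{N}\setminus K$. Since $U_{\mu}$ is real\nobreakdash-analytic on each connected component of $\R^{N}\setminus K$, the identity theorem shows it vanishes identically on any component meeting the open set $\bigl(\R^{N}\setminus\overline{\mathcal{O}}\bigr)\cup\{\,|x|>R\,\}$ on which it is already known to vanish. Here I would use the hypothesis that $\mathcal{O}^{*}\setminus K$ is connected, which is equivalent to the statement that no connected component of $\mathcal{O}\setminus K$ has closure a compact subset of $\mathcal{O}$; a short topological argument comparing the components of $\mathcal{O}\setminus K$ with those of $\R^{N}\setminus K$ should then give that every component of $\R^{N}\setminus K$ is either unbounded or contains a point of $\R^{N}\setminus\overline{\mathcal{O}}$, whence $U_{\mu}\equiv 0$ on $\R^{N}\setminus K$. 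I expect this topological bookkeeping — correctly exploiting the Alexandroff\nobreakdash-compactification hypothesis for an \emph{arbitrary} compact $K$ — to be the main obstacle; the rest is soft.

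Finally I would close the duality loop. Given $u$ harmonic on an open set $W\supseteq K$, pick $\chi\in C_{0}^{\infty}(W)$ with $\chi\equiv 1$ near $K$; then, $\mu$ being carried by $K$,
\begin{equation*}
\int u\, d\mu=\langle\mu,\chi u\rangle=\langle\Delta U_{\mu},\chi u\rangle=\langle U_{\mu},\Delta(\chi u)\rangle=\int_{\R^{N}}U_{\mu}\,\Delta(\chi u)\,dx .
\end{equation*}
Since $\Delta u=0$ on $W$, we have $\Delta(\chi u)=2\,\nabla\chi\cdot\nabla u+u\,\Delta\chi$, supported where $\chi$ fails to be locally constant, hence in $W\setminus K\subseteq\R^{N}\setminus K$, where $U_{\mu}$ vanishes; therefore $\int u\, d\mu=0$, which via Hahn--Banach finishes the proof. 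As an alternative to keep in mind, one can argue constructively: $E(\cdot-y_{0})$ with $y_{0}\notin\mathcal{O}$ is harmonic on $\mathcal{O}$, its pole can be ``pushed'' along a path in $\R^{N}\setminus K$ reaching infinity or $\R^{N}\setminus\mathcal{O}$ (each small displacement of the pole perturbing the potential little, uniformly on $K$), and Roth's fusion lemma for harmonic functions globalizes the resulting local approximations; this route avoids duality but relies on the same connectivity hypothesis in exactly the same place.
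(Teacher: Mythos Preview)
The paper does not give its own proof of this statement: Theorem~\ref{th:approxharm1} is quoted from Gardiner's monograph \cite[Theorem~1.7]{GARD} and used as a black box inside the proof of Proposition~\ref{ProStep1}. There is therefore no proof in the paper to compare your argument against.

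That said, your duality argument is correct and is a standard proof of this harmonic Runge theorem. The Hahn--Banach/Riesz reduction, the vanishing of the Newtonian potential $U_{\mu}$ on $\R^{N}\setminus\mathcal{O}$ and near infinity, and the cutoff pairing $\langle U_{\mu},\Delta(\chi u)\rangle$ at the end are all accurate. Your worry about the topological step is well placed but the resolution is short: if a component $D$ of $\R^{N}\setminus K$ were bounded and contained in $\mathcal{O}$, then $D$ would also be open and closed in $\mathcal{O}\setminus K$, hence a full component of it, with $\overline{D}\subset D\cup K\subset\mathcal{O}$ compact --- contradicting the equivalence you stated for connectedness of $\mathcal{O}^{*}\setminus K$. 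Thus every component of $\R^{N}\setminus K$ is unbounded or meets $\R^{N}\setminus\mathcal{O}$, and real-analyticity of $U_{\mu}$ on $\R^{N}\setminus K$ propagates the vanishing across each component. The alternative pole-pushing route you sketch is also standard and uses the connectivity hypothesis in the same place; neither route is closer to the paper, which simply cites the result.
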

Recall that the Alexandroff compactification of ${\mathcal O}$ is obtained by adding a point, say $\{ \infty \}$ to ${\mathcal O}$ and to consider the topology generated by the open sets of ${\mathcal O}$, and the sets of the form $\{ \infty \} \cup ({\mathcal O} \setminus K)$, with $K$ a subset of ${\mathcal O}$. \par
\begin{remark}
One may state the same result with the $C^{k}$ norm instead of the uniform one. It suffices to consider a compact $\tilde{K}$ whose interior contains ${K}$, apply the above result on $\tilde{K}$ and use standard properties of harmonic functions.
\end{remark}
We choose points $Y_1,...,Y_P$ in each connected component of $\R^{3} \setminus K_{i}$, outside $\overline{\Omega}$. We apply the preceding result with ${\mathcal O}= \R^{3} \setminus \{Y_{1},\dots, Y_{P} \}$ and $K=K_{i}$. In that case, ${\mathcal O}^{*}$ can be thought as the quotient of $\S^{3} = \R^{3} \cup \{ \infty \}$ by the identification of $Y_{1},\dots, Y_{P}$ and $\infty$. \par
Therefore, for any $\nu>0$, we get a map $\hat{\psi_i}$ in $C^{\infty}(\R^3 \setminus \{Y_1,...,Y_P\};\R)$ such that $\hat{\psi_i}$ is harmonic on $\R^3 \setminus \{Y_1,...,Y_P\}$ and such that 
\begin{gather}
\label{eq:psipsichapproche}
\| \hat{\psi_i}-\psi_i \|_{ C^{k+2}(\overline{V_{\eta/2}[\intj(\gamma(t_i))] }) } < \nu, \\
\label{eq:petitessesurbordcontrole}
\| \hat{\psi_i} \|_{C^{k+2}(\overline{V_{\eta/2}}(\partial \Omega\setminus\Gamma))}<\nu.
\end{gather}
Since $\hat{\psi_i}$ is harmonic in $\Omega$, there holds
\begin{equation}
\int_{\partial \Omega} \nabla \hat{\psi_i}\cdot n \, d\sigma=0.
\end{equation}
In order for \eqref{eq:euler.4} to be satisfied we consider ${d}_i$ in $C^{\infty}(\partial \Omega;\R)$ such that
\begin{gather}
\label{eq:5.1}
d_{i} = \nabla \hat{\psi}_i \cdot n \text{ on }\partial \Omega\setminus \Gamma, \\
\label{eq:5.2}
\| d_i \|_{C^{k+1}(\partial \Omega)} \leq C \| \nabla \hat{\psi_i} \cdot n \|_{C^{k+1}(\partial \Omega)}, \\
\label{eq:6}
\int_{\partial \Omega}d_{i} \, d\sigma=0.
\end{gather}
and introduce the harmonic function $h_{i}$ in $\overline{\Omega}$ by the Neumann problem
\begin{gather}
\label{Neumannhi1}
\Delta h_{i} =0 \text{ in } \Omega, \\
\label{Neumannhi2}
\frac{\partial h_{i}}{\partial n} =d_{i} \text{ in } \Omega, \\
\nonumber
\int_{\Omega} h_{i} =0.
\end{gather}
Note that in particular that by standard elliptic estimates,
\begin{equation} \label{eq:petitessefrakh}
\| {h}_i\|_{C^{k+1}(\bar{\Omega})} \leq C \nu.
\end{equation}
We introduce
\begin{equation} \label{DefCheckPsi}
\check{\psi}_i:=\hat{\psi}_i-{h}_i.
\end{equation}
Taking a partition of unity $\chi_i$  associated to the covering of $[0,1]$ by the intervals $(t_i-\delta_i,t_i+\delta_i)$, we define
\begin{equation}
\theta(t,x):=\sum_{n=1}^N \chi_i(t) \check{\psi}_i(x).
\end{equation}
Due to \eqref{eq:5.1} and \eqref{Neumannhi2}, $\theta$ satisfies \eqref{eq:4.9}.
Moreover according to \eqref{eq:continuiteunifX}, \eqref{eq:psipsichapproche}, \eqref{eq:petitessesurbordcontrole} and \eqref{eq:petitessefrakh} we have for $\nu$ small enough with respect to $\eps$ and for some $C>0$
\begin{equation}
\label{eq:thetaetpsiproches}
\sup_{t\in[0,1]}\| \nabla \theta -\nabla \psi\|_{C^k(\overline{V_{\eta/3}}[\gamma(t)])} \leq C \eps.
\end{equation}
In particular by supposing $\eps$ small enough, we have a uniform estimate
\begin{equation} \label{eq:bornesurtheta}
\| \nabla \theta \|_{C^k({\phi^{\nabla \theta}(0,t,\gamma_0)})}
\leq \| \nabla \psi \|_{C^k({\phi^{\nabla \psi}(0,t,\gamma_0)})}+1.
\end{equation}
As long as $\phi^{\nabla \theta}(t,0,\gamma_0)$ remains in $V_{\eta/3}(\gamma(t))$ one has, using Gronwall's lemma,
\begin{multline*}
\| \phi^{\nabla \theta}(t,0,\gamma_0) - \phi^{\nabla \phi}(t,0,\gamma_0) \|_\infty  \\
\leq \| \nabla \theta(t,\cdot) - \nabla \phi(t,\cdot) \|_{C^0([0,1];C^0(\overline{V_{\eta/3}}[\gamma(t)])}
\exp(\| \nabla \psi \|_{L^1(0,1;{\mathcal{L}ip(V_{\eta/3}[\gamma(t)])}}).
\end{multline*}
Then by reducing $\eps$ if necessary, we get thanks to \eqref{eq:thetaetpsiproches} that this is valid for all time in $[0,1]$. Differentiating $\phi$ with respect to $x$ up to the order $k$, we obtain in the same way that for all $t \in [0,1]$,
\begin{multline*}
\| \phi^{\nabla \theta}(t,0,\gamma_0) - \phi^{\nabla \phi}(t,0,\gamma_0) \|_{C^k([0,1])} \\
\leq \| \nabla \theta - \nabla \phi \|_{C^0([0,1];C^k(\overline{V_{\gamma/3}(\gamma(t))})}
\exp(C\| \nabla \psi \|_{L^1((0,1);W^{k+1,\infty}(V_{\eta/3}[\gamma(t)]))}).
\end{multline*}
This ends the proof Proposition \ref{ProStep1}.
\end{proof}
\ \par
%
%
%
%
%
\subsubsection{Case of a smooth $2$-sphere moved by a special analytic isotopy}
In this paragraph, Proposition \ref{ProStep1} is extended to the following.
\begin{proposition}\label{ProStep2}
The conclusions of Proposition \ref{th:3} are satisfied if we assume that $\gamma_0$ is a smooth $2$-sphere ({\it i.e.} $\gamma_{0}$ is the image of $\S^{2}$ by $f_0$ a $C^{\infty}$ embedding) and that $X$ is moreover in $C^{0}([0,1]; C^\omega(\Omega;\R^3))$.
\end{proposition}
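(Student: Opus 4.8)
## Proof proposal for Proposition \ref{ProStep2}

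The plan is to reduce the case of a smooth sphere $\gamma_0$ to the already-established analytic case (Proposition \ref{ProStep1}) by approximating $\gamma_0$ by an analytic sphere, while keeping the driving field $X$ fixed. First I would invoke a density argument: since $f_0$ is a $C^\infty$ embedding of $\S^2$ into $\Omega$, standard approximation results (e.g. Whitney-type approximation, or convolution in analytic local charts, as in \cite{Hirsch}) provide a real-analytic embedding $f_0^\delta$ of $\S^2$ into $\Omega$ with $\|f_0^\delta - f_0\|_{C^{k+1}(\S^2)} \leq \delta$, for $\delta$ as small as we wish; for $\delta$ small the image $\gamma_0^\delta := f_0^\delta(\S^2)$ is still a contractible analytic $2$-sphere embedded in $\Omega$, isotopic to $\gamma_0$. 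Because $X$ is continuous in time with values in $C^\omega(\Omega;\R^3)$ (hence locally Lipschitz in $x$ uniformly in $t$), the flow $\phi^X$ depends continuously — indeed, by differentiating the flow equation, in $C^k$ — on the initial hypersurface: for $\delta$ small enough one has $\phi^X(t,0,\gamma_0^\delta) \subset \Omega$ for all $t \in [0,1]$ (using \eqref{eq:exact2}-type stability and compactness of $[0,1]$), and $\|\phi^X(t,0,\gamma_0^\delta) - \phi^X(t,0,\gamma_0)\|_{C^k(\S^2)}$ is uniformly small in $t$. In particular, setting $\gamma_1^\delta := \phi^X(1,0,\gamma_0^\delta)$, we get $\|\gamma_1^\delta - \gamma_1\|_{C^k(\S^2)} \leq \eps/3$.

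Next I would apply Proposition \ref{ProStep1} to the pair $(\gamma_0^\delta, X)$: since $\gamma_0^\delta$ is an analytic $2$-sphere and $X \in C^0([0,1];C^\omega(\Omega;\R^3))$, the hypotheses are met, and we obtain $\theta \in C^\infty_0((0,1)\times\overline\Omega;\R)$ satisfying \eqref{eq:thetaharm}, \eqref{eq:4.9}, together with $\phi^{\nabla\theta}(t,0,\gamma_0^\delta) \subset \Omega$ for all $t$ and $\|\phi^{\nabla\theta}(1,0,\gamma_0^\delta) - \gamma_1^\delta\|_{C^k(\S^2)} \leq \eps/3$. The conclusions \eqref{eq:thetaharm} and \eqref{eq:4.9} are statements about $\theta$ alone and are thus unchanged. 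It remains to transfer the conclusions \eqref{eq:thetarestedansomega} and \eqref{eq:thetafaitletravail} from $\gamma_0^\delta$ back to $\gamma_0$. For this I would again use continuous dependence of the flow $\phi^{\nabla\theta}$ on its initial surface: $\nabla\theta$ is a fixed smooth (hence Lipschitz, uniformly in $t$) vector field, so $\|\phi^{\nabla\theta}(t,0,\gamma_0) - \phi^{\nabla\theta}(t,0,\gamma_0^\delta)\|_{C^k(\S^2)} \leq C(\theta)\,\delta$ for all $t \in [0,1]$, by Gronwall applied to the flow and its $x$-derivatives up to order $k$, exactly as in the Gronwall estimates at the end of the proof of Proposition \ref{ProStep1}.

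There is a subtlety in the order of quantifiers: the constant $C(\theta)$ in the last estimate depends on $\theta$, which is produced only after $\delta$ is fixed. To handle this cleanly I would proceed in two stages. Fix first an auxiliary small parameter, apply the construction with $\gamma_0^\delta$ for some provisional $\delta$ and $\eps/2$, obtaining $\theta$; this $\theta$ stays a bounded distance (in the relevant $C^k$ sense) from $\gamma_1^\delta$, and by the stability of the flow of the smooth field $\nabla\theta$, the quantity $\sup_{t\in[0,1]}\mathrm{dist}(\phi^{\nabla\theta}(t,0,\gamma_0),\partial\Omega)$ is bounded below once $\delta$ is small relative to $\theta$. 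Concretely: one chooses $\delta$ small enough that $\gamma_0^\delta$ works in Proposition \ref{ProStep1} to accuracy $\eps/3$, obtains $\theta$, and \emph{then}, if necessary, observes that the whole construction of Proposition \ref{ProStep1} actually provides $\theta$ with $\nabla\theta$ uniformly close to $\nabla\psi$ on a fixed tubular neighborhood $V_{\eta/3}(\gamma^\delta(t))$ — and since $\gamma^\delta(t)$ is within $C\delta$ of $\gamma(t)=\phi^X(t,0,\gamma_0)$, for $\delta$ small this neighborhood also contains $\phi^{\nabla\theta}(t,0,\gamma_0)$, which therefore stays in $\Omega$ for all $t$, giving \eqref{eq:thetarestedansomega}; and the triangle inequality
$$
\|\phi^{\nabla\theta}(1,0,\gamma_0)-\gamma_1\|_{C^k(\S^2)} \leq \|\phi^{\nabla\theta}(1,0,\gamma_0)-\phi^{\nabla\theta}(1,0,\gamma_0^\delta)\|_{C^k} + \|\phi^{\nabla\theta}(1,0,\gamma_0^\delta)-\gamma_1^\delta\|_{C^k} + \|\gamma_1^\delta-\gamma_1\|_{C^k}
$$
is bounded by $C(\theta)\delta + \eps/3 + \eps/3 \leq \eps$ after a final reduction of $\delta$, yielding \eqref{eq:thetafaitletravail}. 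The main obstacle is precisely this bookkeeping of the dependence of the approximation parameter $\delta$ on the control $\theta$ it helps produce; once one notes that $\theta$ depends only on $\gamma_0^\delta$ through a finite covering argument with fixed tube radius $\eta$, and that shrinking $\delta$ afterwards only improves the $C^k$ estimates without changing $\theta$, the argument closes.
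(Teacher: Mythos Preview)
Your overall strategy---approximate $\gamma_0$ by an analytic sphere, apply Proposition~\ref{ProStep1}, then Gronwall back---is exactly the paper's. The gap is in how you resolve the circularity you yourself flag: the Gronwall constant $C(\theta)$ depends on $\|\nabla\theta\|_{C^{k+1}}$ near the moving surface, and $\theta$ is built from $\gamma_0^\delta$, so you cannot ``shrink $\delta$ afterwards without changing $\theta$''---a new $\delta$ forces a new $\gamma_0^\delta$, a new $\psi$, and a new $\theta$. Your two-stage bookkeeping does not escape this loop, and the claim that the tube radius $\eta$ from Lemma~\ref{prop:etendrepsi} is ``fixed'' is not justified either, since $\eta$ comes from the analyticity bounds of $\gamma_0^\delta$.

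The paper's resolution is different and is the missing idea: one shows that the relevant bounds are \emph{uniform in the approximation parameter}. Since $\gamma_\nu \to \gamma_0$ in $C^\infty(\S^2)$, the domains $\intj(\phi^X(t,0,\gamma_\nu))$ vary smoothly, so the constants in the Schauder estimates for the Neumann problem \eqref{SysNeumann} are uniform in $\nu$; hence $\|\nabla\psi\|_{C^{k+1}}$ is uniformly bounded, and via \eqref{eq:bornesurtheta} so is $\|\nabla\theta_\eps\|_{C^{k+1}}$ on the relevant set. This makes the Gronwall exponent independent of $\nu$, and \emph{then} one may take $\nu$ small. You should replace your two-stage argument by this uniform-elliptic-estimate observation.

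A secondary point: the paper does not use a generic analytic approximation but Whitney's embedding of $\gamma_0$ into a one-parameter family $\gamma_\nu$ of disjoint analytic spheres with $\gamma_0 \subset \intj(\gamma_\nu)$. Because $\nabla\theta$ is divergence-free, the flow of $\gamma_0$ is then trapped inside the flow of $\gamma_\nu$, which is exactly the region where $\nabla\theta$ is controlled. This enclosure makes the ``flow stays in the good tube'' step immediate, whereas with a non-nested approximation you would need an additional continuity/bootstrapping argument to keep $\phi^{\nabla\theta}(t,0,\gamma_0)$ inside $V_{\eta/3}(\gamma^\delta(t))$.
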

\begin{proof}
Due to a result of H. Whitney (see \cite{Whi36}), $\gamma_{0}$ is imbedded in a smooth family of surfaces $\gamma_{\nu}$, $\nu \in (-\nu_{0},\nu_{0})$, with $\gamma_{0}=\gamma_{\nu}$ for $\nu=0$, $\gamma_{\nu} \cap \gamma_{\nu'} = \emptyset$ for $\nu \neq \nu'$ and $\gamma_{\nu}$ real analytic for $\nu \neq 0$. 
It follows that either for $\nu>0$ or for $\nu<0$, one has
\begin{equation} \label{gammanu}
\gamma_{0} \subset \intj[\gamma_{\nu}].
\end{equation}
Without loss of generality, we assume that \eqref{gammanu} holds for $\nu>0$. The family $\gamma_{\nu}$ being smooth with respect its parameters, one has 
\begin{equation} \label{CVGammaNu}
{\gamma}_{\nu} \ \to \ \gamma_0 \ \text{ in } \ C^\infty(\S^2) \text{ as } \nu \rightarrow 0^{+}.
\end{equation}
We can then apply Proposition \ref{ProStep1} to $X$ on ${\gamma}_{\nu}$ for $\nu>0$ small. We construct a $\theta_\eps$ such that \eqref{eq:thetaharm}, \eqref{eq:4.9}, \eqref{eq:thetarestedansomega} and \eqref{eq:thetafaitletravail} apply for ${\gamma}_{\nu}$ instead of $\gamma_0$.
The construction also generates a family $\psi$ satisfying \eqref{SysNeumann} in $\intj [\phi^{X}(t,0,\gamma_{\nu})]$. \par
Now we have uniform bounds on the $\| \nabla \psi\|_{C^{k}(\phi^{\nabla\psi}(t,0,\gamma_{\nu}))}$ with respect to $\nu$, because the constants of the elliptic estimates (see e. g. \cite[Theorem 6.30 and Lemma 6.5]{gilbtrud}) are uniform with respect to $\nu$ thanks to \eqref{CVGammaNu}. We deduce from \eqref{eq:bornesurtheta} that we have uniform bounds
on $\| \nabla \theta_\eps \|_{C^k(\phi^{\nabla \theta_\eps}(t,0,\gamma_{\nu}))}$ as $\nu\to 0^{+}$. 
By Gronwall's lemma one gets
\begin{multline*}
\| \phi^{\nabla\theta_\eps}(t,0,\tilde{\gamma_0}) - \phi^{\nabla \theta_\eps}(t,0,\gamma_0) \|_{C^k(\S^2)} \leq \\
C_{k} \| \gamma_0 - \gamma_{\nu}\|_{C^k(\S^2)}
\exp \left( \int_0^1 \| \nabla \theta_\eps \|_{C^{k+1}(\intj (\phi^{\nabla \theta_\eps}(t,0,\gamma_{\nu}))} \, dt \right).
\end{multline*}
Hence we deduce the claim by taking $\nu$ small enough.
Of course by reparameterizing $X$ and thus $\theta$ in time we can always assume that $\theta$ is compactly supported in time.
\end{proof}
\subsubsection{Case of smooth embedded contractible two-sphere moved by a smooth isotopy}
We now prove Proposition \ref{th:3}. 
\begin{proof}[Proof of Proposition \ref{th:3}]
In the general case, we assume that $X$ is merely $C^{\infty}(\overline{\Omega};\R^{3})$. Let $\lambda>0$ such that
\begin{equation*}
\max_{t \in [0,1]} \mbox{dist}(\phi^{X}(t,0,\gamma_{0}) , \partial \Omega) >2 \lambda.
\end{equation*}
We define ${\mathcal U}_{t}:= {\mathcal V}_{\lambda}(\intj(\gamma(t)))$. Reducing $\lambda$ if necessary, we can obtain that for all $t$, ${\mathcal U}_{t}$ is diffeomorphic to a ball. \par
Now we can use Whitney's approximation theorem (see e.g. \cite[Proposition 3.3.9]{92KRAPRA}),
for any $\mu>0$ and any $k \in \N$ there exists $X_{\mu} \in C([0,1];C^\omega(\R^3))$ such that
\begin{equation*}
\| X_{\mu} - X \|_{C([0,1];C^{k+1}({\mathcal U}_{t}))} \leq \mu.
\end{equation*}
Moreover, we can ask that
\begin{equation*}
\div X_{\mu} =0 \ \text{ in } \ [0,1] \times \R^{3}.
\end{equation*}
To see this, we use the fact that ${\mathcal U}_{t}$ is a topological ball; hence its second de Rham cohomology space is trivial, and any divergence-free vector field (in particular $X$) on ${\mathcal U}_{t}$ is of the form $\curl A$.
Hence for each time $t$ one can apply Whitney's approximation theorem (at order $k+2$) on $\varphi A$ where $X = \curl A$ and $\varphi(t,x)$ is a smooth cutoff function equal to $1$ on ${\mathcal U}_{t}$ and to $0$ for $d(x, {\mathcal U}_{t}) \geq 2 \lambda$.  We obtain an approximating vector field $B_{\mu}$ and define $X_{\mu} := \curl B_{\mu}$. Using as before the compactness of the time interval $[0,1]$ and a partition of unity (as for the proof of Proposition \ref{ProStep1}), we can obtain a smooth approximation uniformly in time. \par 
Now we apply Proposition \ref{ProStep2} with $\tilde{X}_{\mu}$, apply Gronwall's lemma and get the result for $\mu$ small enough. 
\end{proof}
%
%
%
%
%
%
%
%
%
\section{Proof of Theorem \ref{th:fond.1}}
\label{Sec:Proof}
This section is devoted to the proof of Theorem \ref{th:fond.1}. As in \cite{96COR-2,00Gla,GlHo08}, we start with the case where $u_{0}$ is small, and then treat the general case.
\subsection{Preliminaries}
First we introduce some functions on $\Omega$ in order to take its topology into account, more precisely to describe its first de Rham cohomology space.
We recall the following construction. \par
Let $\Sigma_1,...,\Sigma_g$ be $g$ smooth manifolds  with boundaries of dimension $n-1$ inside $\overline{\Omega}$ such that:
\begin{itemize}
\item for all $i$ in $\{1, \dots, g \}$, $\partial \Sigma_i\subset \partial \Omega$ and $\Sigma_{i}$ is transverse to $\partial \Omega$,
\item for all $i,j$ in $\{1, \dots, g \}^2$ with $i\neq j$,  $\Sigma_i$ and $\Sigma_j$ intersect transversally (which, by definition, includes the case of an empty intersection),
\item $\Omega \setminus \cup_{i=1}^g \Sigma_i$ is simply connected.
\end{itemize}
For $i=1, \dots, g$, we consider
\begin{equation*}
{X}_i:=\{p\in H^1(\Omega\setminus \cup_{k=1}^g \Sigma_k),\ 
[p]_i=\text{constant},\ [p]_j=0,\ j\neq i \},
\end{equation*}
where $[p]_k=p_{|\Sigma_k^+}-p_{|\Sigma_k^-}$ is the jump of $p$ on each arbitrarily fixed side of $\Sigma_k$ in $\Omega$. Then by Lax-Milgram's Theorem there exists a unique $q_i \in X_{i}$ such that for all $p\in {X}_i$ 
\begin{equation*}
\int_{\Omega}\nabla q_i \cdot \nabla p \, dx=[p]_i,
\end{equation*}
which leads to the existence of a unique $p_i$ such that 
\begin{gather*}
\Delta p_i = 0 \ \text{ in } \ \Omega\setminus \cup_{k=1}^g \Sigma_k, \\
\partial_n p_i =0 \ \text{ on } \ \partial \Omega, \\
{[p_i]}_{i} = 1 \ \text{ and } \ {[p_i ]}_j = 0 \ \text{ for } \ j\neq i, \\
{[\partial_n q_i]}_i=0,
\end{gather*}
and we take
\begin{equation*}
Q_i:=\nabla p_i,
\end{equation*}
which is regular in $\overline{\Omega}$. Then we have the following result.
\begin{proposition}[see e.g. \cite{temam79}, Appendix I, Proposition 1.1] \label{th:tem.1}
For any $X\in L^{2}(\Omega;\R^{3})$ such that 
$\curl X=0$ in $\Omega$, there exist $\chi$ in $H^1(\Omega;\R)$ and $\alpha_{1},\dots,\alpha_{g}$ in $\R$ such that
\begin{equation}
X=\nabla \chi + \sum_{i=1}^g \alpha_i {Q_i}.
\end{equation}
\end{proposition}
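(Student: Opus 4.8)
The plan is to exploit that $\Omega \setminus \cup_{i=1}^g \Sigma_i$ is simply connected in order to reduce a curl-free vector field on $\Omega$ to a gradient, after subtracting off the correct combination of the model fields $Q_i$. First I would observe that the jumps of a curl-free field across the cutting surfaces are ``topological constants'': given $X \in L^2(\Omega;\R^3)$ with $\curl X = 0$, one can locally write $X = \nabla \tilde{\chi}$ on the simply connected open set $\Omega' := \Omega \setminus \cup_k \Sigma_k$, and the potential $\tilde\chi$ is well defined up to an additive constant on $\Omega'$ once a base point is chosen. The obstruction to $\tilde\chi$ being globally single-valued on $\Omega$ is exactly the collection of jumps $[\tilde\chi]_i := \tilde\chi_{|\Sigma_i^+} - \tilde\chi_{|\Sigma_i^-}$, which are genuine constants on each $\Sigma_i$ because their tangential derivative along $\Sigma_i$ equals the (well-defined, since $X$ has no singular part on $\Sigma_i$) tangential component of $X$ computed from either side, and these agree as $\curl X = 0$. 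Call these constants $\alpha_i := [\tilde\chi]_i$.

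Next I would set $\chi := \tilde\chi - \sum_{i=1}^g \alpha_i p_i$ on $\Omega'$, where the $p_i$ are the functions constructed above with $[p_i]_j = \delta_{ij}$. By construction $[\chi]_j = [\tilde\chi]_j - \sum_i \alpha_i [p_i]_j = \alpha_j - \alpha_j = 0$ for every $j$, so $\chi$ extends to a single-valued function on all of $\Omega$; the regularity $\chi \in H^1(\Omega)$ follows from $\nabla\chi = X - \sum_i \alpha_i Q_i \in L^2(\Omega;\R^3)$ together with the vanishing of the jumps (so no distributional surface terms appear in $\nabla \chi$ across the $\Sigma_i$). Rearranging gives the claimed identity $X = \nabla\chi + \sum_{i=1}^g \alpha_i Q_i$. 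Uniqueness, if needed, comes from testing against the $Q_j$ and using that the $[p_i]_j$ matrix is the identity, but the statement only asserts existence.

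The main obstacle — and the point that needs the most care — is handling the low regularity $X \in L^2$ rather than $X$ continuous: the ``jump across $\Sigma_i$'' of a potential must be made sense of when $X$ is only square-integrable. Here I would rely on the fact that $\curl X = 0$ in the distributional sense already guarantees that on the simply connected piece $\Omega'$ there is $\tilde\chi \in H^1(\Omega')$ with $\nabla\tilde\chi = X$ (this is the standard de Rham / Poincaré lemma in $L^2$ for simply connected domains), and that traces of $H^1$ functions on the two faces $\Sigma_i^\pm$ are well defined in $H^{1/2}$; the equality of tangential traces of $X$ from both sides, which forces $[\tilde\chi]_i$ to be constant, is exactly the condition encoded in $\curl X = 0$ across $\Sigma_i$. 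Since this is a classical result (cited from Temam, Appendix I), in the write-up I would simply refer to \cite{temam79} for these technical trace arguments and present the decomposition at the level of the jumps as above.
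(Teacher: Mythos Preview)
The paper does not actually give a proof of this proposition: it is stated with the citation ``see e.g.\ \cite{temam79}, Appendix I, Proposition 1.1'' and used as a black box, so there is no proof in the paper to compare your attempt against. Your sketch is precisely the classical argument underlying that reference --- integrate $X$ on the simply connected cut domain $\Omega' = \Omega \setminus \cup_k \Sigma_k$ to get a multivalued potential $\tilde\chi$, read off the periods $\alpha_i = [\tilde\chi]_i$, subtract $\sum_i \alpha_i p_i$ to kill the jumps, and observe that the resulting $\chi$ lies in $H^1(\Omega)$ --- and it is correct. The only delicate point you flag yourself (making sense of the jumps and of the $L^2$ Poincar\'e lemma when $X$ is merely square-integrable) is handled exactly as you indicate, via the distributional identity $\curl X = 0$ and $H^{1/2}$ traces on $\Sigma_i^\pm$; deferring those details to \cite{temam79} is consistent with what the paper itself does.
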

\subsection{A fixed point operator}
Now we introduce an operator, whose fixed point will give a solution of our problem when taking $u_{0}$ (suitably small) into account. \par
%
%
%
We introduce $R>0$ such that 
\begin{equation*}
\overline{\Omega} \subset B_{R}:=B_{\R^{3}}(0,R).
\end{equation*}
We introduce a linear continuous extension operator $\pi: C(\overline{\Omega};\R^3) \to C_0(B_R;\R^3)$ such that $\forall k\in \N$ and all $\beta \in (0,1)$, $\pi$ is continuous from $C^{k,\beta}(\overline{\Omega};\R^3)$ to $C_0^{k,\beta}(B_R;\R^3)$. \par
Let $\delta \in (0,1)$, and consider $\mu \in C^{\infty}_{0}([0,+\infty);\R)$ with support in $[0,\delta]$ and with value $1$ on a neighborhood of $0$. \par
\ \par
Given $\eps>0$ we denote 
\begin{equation*}
\bar{y}:=\nabla \theta,
\end{equation*}
the potential flow obtained by Proposition \ref{th:3} with $X$ obtained from Theorem \ref{th:isotop.1}.
For some $\nu \in (0,1)$ which will be small in the sequel we define with $\alpha \in (0,1)$:
\begin{equation*}
X_\nu := \Big\{ u \in L^{\infty}((0,1);C^{k,\alpha}({\Omega};\R^3)), \ \div u=0 \text{ in } (0,1) \times \Omega, \ 
\| u-\bar{y} \|_{L^{\infty}(0,1; C^{k,\alpha}({\Omega}))} \leq \nu \Big\}.
\end{equation*}
%
It is straightforward to check that $X_{\nu}$ is a closed convex subset of $L^{\infty}((0,1) \times \Omega)$. \par
\ \par
We extend $\bar{y}$ by $\pi$ and still denote $\bar{y}$ the extended function. Now, given $u \in X_{\nu}$, we associate $F(u)$ as follows. First, we introduce
\begin{equation*}
\tilde{u}:=\bar{y}+\pi(u-\bar{y})= \pi(u).
\end{equation*}
Next we consider $\omega_u \in L^{\infty}(0,1;C^{k-1,\alpha}(B_R;\R^{3}))$ as the solution of the following transport equation:
\begin{gather}
\label{eq:conditioninitialeomega}
\omega_u (\cdot,0)=\curl \pi(u_0) \ \text{ in } \ {B}(0,R), \\
\label{eq:transportomegaparutilde}
\partial_t\omega_u + (\tilde{u} \cdot \nabla) \omega_u = (\omega_u \cdot \nabla)\tilde{u}
- (\div \tilde{u}) \omega_{u} \ \text{ in } \ (0,1)\times B_R.
\end{gather}
%
%
Due the support of $\tilde{u}$, one sees by using characteristics that the system \eqref{eq:conditioninitialeomega}-\eqref{eq:transportomegaparutilde} is well-posed and that indeed $\omega_{u}$ has the claimed regularity (details for obtaining the regularity can be found in \cite{00Gla}).  \par
Now observing that
\begin{equation*}
\curl (a \wedge b) =  (\div b) a - (\div a) b + (b \cdot \nabla) a - (a \cdot \nabla) b,
\end{equation*}
we easily deduce that
\begin{equation*}
\div(\omega_{u})=0 \ \text{ in } \ B_R, 
\end{equation*}
so that $\omega_{u}$ can be written in the form $\curl \hat{v}$ in $B_R$.
Hence it is classical (since $\int_{\partial \Omega}u_0 \cdot n \, d\sigma=\int_{\partial \Omega}\bar{y} \cdot n \, d\sigma=0$) that there exists a unique ${v}\in L^{\infty}(0,1;C^{k,\alpha}(\overline{\Omega};\R^3))$ such that 
\begin{subequations}
\label{eq:defmathfrav}
\begin{gather}
\label{eq:defmathfrav.1}
\curl {v} = \omega_u \ \text{ in } \ [0,1] \times \Omega, \\
\label{eq:defmathfrav.2}
\div {v} = 0 \ \text{ in } \ [0,1] \times \Omega,\\
\label{eq:defmathfrav.3}
{v} \cdot n = \mu(t)u_0 \cdot n +\bar{y} \cdot n \ \text{ on } \ [0,1] \times \partial \Omega, \\
\label{eq:defmathfrav.4}
\int_\Omega {v} \cdot Q_i \, dx = 0, \  i=1,\dots,g.
\end{gather}
\end{subequations}
According to Proposition \ref{th:tem.1}, we can determine $g$ time-dependent functions $\lambda_1,\dots,\lambda_g$ such that, if we define 
\begin{equation} \label{eq:defdev}
V := {v} + \sum_{i=1}^g \lambda_i(t) {Q}_i,
\end{equation}
we have for all $j = 1, \dots, g$
\begin{equation}
\label{eq:lambdai1}
\int_\Omega V(0) \cdot Q_j \, dx = \int_\Omega u_0 \cdot Q_j \, dx=0,\\
\end{equation}
and for all $t \in [0,1]$,
\begin{equation} 
\label{eq:lambdai2}
\int_\Omega V(t,x) \cdot Q_{j}(x) \, dx - \int_\Omega u_{0}(x) \cdot Q_{j}(x) \, dx   = - \int_{0}^{t} \int_{\Omega} (u(\tau,x) \wedge \omega_u(\tau,x)) \cdot Q_j(x) \, dx \, d \tau.
\end{equation}
This is possible in a unique way since the matrix $(\int_\Omega Q_i \cdot Q_j \, dx)_{i,j}$ is invertible, as a Gram matrix of independent functions. \par
Now we finally define
\begin{equation*}
F(u):=V.
\end{equation*}
\ \par
\subsection{Finding a fixed point}
Our goal is to prove hereafter:
\begin{proposition} \label{th:Faunpointfixe}
Given $\nu>0$, if $\| u_0 \|_{C^{j,\alpha}(\Omega)}$ is small enough, $F$ admits a fixed point in $X_\nu$.
\end{proposition}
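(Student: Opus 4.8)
The plan is to apply the Schauder fixed point theorem to $F$ on the closed convex set $X_\nu \subset L^\infty((0,1)\times\Omega)$. There are three things to verify: that $F$ maps $X_\nu$ into itself when $\|u_0\|_{C^{j,\alpha}(\Omega)}$ is small enough, that $F(X_\nu)$ is relatively compact in the ambient topology, and that $F$ is continuous. The heart of the matter is the self-map property, and the structure of the argument is dictated by the way $F$ was constructed: $\tilde u = \pi(u)$ drives a transport equation for the vorticity $\omega_u$; then $v$ is recovered from $\omega_u$ by a div-curl system with Neumann-type data $\mu(t)u_0\cdot n + \bar y\cdot n$, and finally the harmonic correctors $\lambda_i(t)Q_i$ are added to enforce the circulation conditions \eqref{eq:lambdai1}-\eqref{eq:lambdai2}.

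First I would record the elliptic estimate for the div-curl system \eqref{eq:defmathfrav}: there is a constant $C$, depending only on $\Omega$ and $k,\alpha$, such that
\begin{equation*}
\| v \|_{L^\infty(0,1;C^{k,\alpha}(\overline\Omega))} \leq C\big( \| \omega_u \|_{L^\infty(0,1;C^{k-1,\alpha})} + \| \mu(\cdot)u_0\cdot n + \bar y\cdot n \|_{L^\infty(0,1;C^{k,\alpha}(\partial\Omega))} \big).
\end{equation*}
Since $\bar y \cdot n$ is fixed once $\eps$ (hence $\theta$) is chosen, and since $\mu$ is bounded, the boundary term is $\|\bar y\cdot n\|_{C^{k,\alpha}} + O(\|u_0\|_{C^{k,\alpha}})$. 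For the vorticity, the transport equation \eqref{eq:transportomegaparutilde} is linear in $\omega_u$ with coefficients controlled by $\|\tilde u\|_{L^\infty(0,1;C^{k,\alpha})} \leq C(\|\bar y\|_{C^{k,\alpha}} + \nu)$ (using $u\in X_\nu$ and continuity of $\pi$), so Gronwall on the flow of $\tilde u$ gives $\|\omega_u\|_{L^\infty(0,1;C^{k-1,\alpha})} \leq C(\nu)\|\curl\pi(u_0)\|_{C^{k-1,\alpha}} \leq C(\nu)\|u_0\|_{C^{k,\alpha}}$. Plugging back, $v$ is within $C(\|u_0\|+ \text{small})$ of the divergence-free field with curl zero and normal trace $\bar y\cdot n$ — which, by uniqueness in \eqref{eq:defmathfrav} and the fact that $\bar y = \nabla\theta$ satisfies \eqref{eq:thetaharm}-\eqref{eq:4.9}, is exactly $\bar y$ (up to the orthogonality normalization \eqref{eq:defmathfrav.4}, which $\bar y$ also satisfies since $\curl\bar y=0$ allows the standard choice). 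Similarly the circulation corrections: from \eqref{eq:lambdai2} the $\lambda_i$ are controlled by $\int_0^1\|u\wedge\omega_u\| \leq C(\nu)\|u_0\|_{C^{k,\alpha}}$ after inverting the Gram matrix, so $\sum\lambda_i Q_i$ is small. Collecting everything, $\|F(u) - \bar y\|_{L^\infty(0,1;C^{k,\alpha})} \leq C_0(\nu)\|u_0\|_{C^{k,\alpha}}$ for a constant $C_0(\nu)$ independent of $u\in X_\nu$; choosing $\|u_0\|_{C^{k,\alpha}} \leq \nu/C_0(\nu)$ yields $F(X_\nu)\subset X_\nu$. It also must be checked that $F(u)$ is genuinely divergence-free and lies in $L^\infty(0,1;C^{k,\alpha})$, which follows from \eqref{eq:defmathfrav.2}, the regularity of $Q_i$, and the regularity theory already invoked for $\omega_u$ and $v$.

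For compactness, I would note that $F(u)=V$ solves, by construction, the Euler-type system obtained by differentiating: $\partial_t V$ can be expressed through $\partial_t v$ and $\dot\lambda_i Q_i$, and from the equations \eqref{eq:defmathfrav}, \eqref{eq:transportomegaparutilde} one gets a bound on $\partial_t V$ in $L^\infty(0,1;C^{k-1,\alpha})$ (or at least in a weaker space), uniformly over $u\in X_\nu$. Combined with the uniform bound in $L^\infty(0,1;C^{k,\alpha})$, this gives equicontinuity in time with values in $C^{k-1,\alpha}$ and, via Arzelà–Ascoli together with the compact embedding $C^{k,\alpha}\hookrightarrow C^{k,\alpha'}$ for $\alpha'<\alpha$, relative compactness of $F(X_\nu)$ in, say, $C^0([0,1];C^{k}(\overline\Omega))$ — a topology in which $X_\nu$ is still closed and convex, and on which $F$ is well-defined. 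Continuity of $F$ in this topology follows from the linearity of the transport equation \eqref{eq:transportomegaparutilde} and of the div-curl system in their right-hand sides, together with continuous dependence of the flow of $\tilde u$ on $\tilde u = \pi(u)$: if $u_n\to u$ then $\tilde u_n \to \tilde u$, the flows converge, hence $\omega_{u_n}\to\omega_u$, hence $v_n\to v$ and $\lambda_i^n\to\lambda_i$, so $V_n\to V$.

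The main obstacle, and the step deserving the most care, is the self-map estimate — specifically, making sure that the "base point" of the contraction-like estimate is genuinely $\bar y$ and not merely something $O(\nu)$-close to it for structural reasons unrelated to $u_0$. This requires using precisely that $\bar y=\nabla\theta$ is a potential flow tangent to $\partial\Omega\setminus\Gamma$ with the prescribed normal trace and zero curl, so that when $u_0=0$ one has $\omega_u\equiv 0$, $\lambda_i\equiv 0$, and the unique solution of \eqref{eq:defmathfrav} is exactly $\bar y$; the whole deviation of $F(u)$ from $\bar y$ is then genuinely driven by $u_0$ through the vorticity and the circulation integrals, and can be absorbed into $\nu$ by smallness of $u_0$. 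A secondary subtlety is keeping the constants $C(\nu)$ from the Gronwall estimates finite and controlled: they depend on $\nu$ through $\|\tilde u\|_{C^{k,\alpha}} \leq C(\|\bar y\|_{C^{k,\alpha}}+\nu)$, but for $\nu\in(0,1)$ this is uniformly bounded, so $C_0(\nu)$ can in fact be taken independent of $\nu\in(0,1)$, and then $\|u_0\|_{C^{k,\alpha}} \leq \nu/C_0$ closes the argument cleanly.
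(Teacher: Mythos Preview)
Your proposal is correct and follows essentially the same route as the paper: Schauder's fixed point theorem with the three verifications (self-map via the transport estimate for $\omega_u$ plus the div--curl elliptic estimate and smallness of the $\lambda_i$, compactness via a bound on $\partial_t V$ and Ascoli, continuity via convergence of flows and interpolation). Your explicit identification of $\bar y$ as the exact output when $u_0=0$ (checking in particular that $\bar y=\nabla\theta$ satisfies \eqref{eq:defmathfrav.4} because $Q_i\cdot n=0$ and $\div Q_i=0$) is a point the paper leaves implicit, but otherwise the arguments coincide.
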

\noindent
We will use the following lemma (see e.g. \cite[Theorem 3.14]{BCD}):
\begin{lemma}\label{EstTransport}
Let $j \in \N$, $a \in (0,1)$. Let $f$, $v$, and $g$ be elements of $L^{\infty}(0,1;C^{j,\alpha}(B_{R};\R^3))$ satisfying
\begin{equation*}
\partial_t f + (v \cdot \nabla) f = g,
\end{equation*}
with $v$ and $f(0,\cdot)$ compactly supported in $B_R$. Then for some $C>0$ depending on $j$ and $\alpha$ only,
there holds 
\begin{multline*}
\| f(t,\cdot) \|_{C^{j,\alpha}(B_{R})} \leq
\exp \left( C\int_{0}^{t} V(s) \, ds \right) \\
\left[\| f(0,\cdot) \|_{C^{j,\alpha}(B_{R})} 
+ \int_{0}^{t} \exp \left( - C\int_{0}^{\tau} V(s) \, ds \right)  \| g (\tau,\cdot) \|_{C^{j,\alpha}(B_{R})} \, d \tau\right],
\end{multline*}
with
\begin{equation*}
V(s):=  \| \nabla v (s,\cdot) \|_{C^{j-1,\alpha}(B_{R})} \ \text{ if } \ j \geq 1 \ \text{ and } \
V(s):= \| \nabla v (s,\cdot) \|_{L^{\infty}(B_{R})} \ \text{ if } \ j =0. 
\end{equation*}
\end{lemma}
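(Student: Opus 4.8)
This is the classical H\"older estimate for linear transport equations --- it is \cite[Theorem~3.14]{BCD} --- and I would establish it by the method of characteristics for the base order $j=0$, reduce the higher orders to it by differentiating the equation, and close with Gr\"onwall's lemma. Note that the very presence of $V$ in the statement forces $\nabla v\in L^{\infty}((0,1)\times B_{R})$ (for $j=0$ this is the finiteness of $V$; for $j\ge1$ it is anyway automatic since $v\in L^{\infty}(0,1;C^{j,\alpha}(B_{R}))$). Hence the flow $\Phi(\tau,t,\cdot)$ of $v$, defined by $\partial_{\tau}\Phi(\tau,t,x)=v(\tau,\Phi(\tau,t,x))$ and $\Phi(t,t,x)=x$, is well defined, Lipschitz in $x$ uniformly in $\tau,t$, invertible, and equal to the identity outside a fixed compact subset of $B_{R}$ (because $v$ is compactly supported), and $f$ is given by
\begin{equation*}
f(t,a)=f(0,\Phi(0,t,a))+\int_{0}^{t}g(\tau,\Phi(\tau,t,a))\,d\tau.
\end{equation*}

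For $j=0$: differentiating in $x$ the ODE defining $\Phi$ and applying Gr\"onwall gives $\mathrm{Lip}\big(\Phi(\tau,t,\cdot)\big)\le\exp\big(\int_{\tau}^{t}\|\nabla v(s,\cdot)\|_{L^{\infty}}\,ds\big)$ for $0\le\tau\le t\le1$. Inserting this in the representation formula, using that composition with $\Phi$ preserves the $L^{\infty}$ norm and that $[h\circ\Psi]_{\alpha}\le[h]_{\alpha}\,\mathrm{Lip}(\Psi)^{\alpha}$, adding the $L^{\infty}$ and the $\alpha$-seminorm contributions, and invoking $\alpha\le1$ and $\|\nabla v\|_{L^{\infty}}\le V$, one obtains exactly the stated inequality, with $C=1$. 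Moreover, running the same estimate on a subinterval $[s,t]$ (using the invertibility of $\Phi$ for the backward direction) shows that $t\mapsto\|f(t,\cdot)\|_{C^{0,\alpha}(B_{R})}$ is continuous and, for a.e.\ $t$, satisfies the differential inequality
\begin{equation*}
\tfrac{d}{dt}\|f(t,\cdot)\|_{C^{0,\alpha}(B_{R})}\le V(t)\,\|f(t,\cdot)\|_{C^{0,\alpha}(B_{R})}+\|g(t,\cdot)\|_{C^{0,\alpha}(B_{R})},
\end{equation*}
and likewise for any solution of $\partial_{t}h+(v\cdot\nabla)h=G$ with $G\in L^{\infty}(0,1;C^{0,\alpha}(B_{R}))$.

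For $j\ge1$: I would differentiate the equation. For each multi-index $\beta$ with $|\beta|\le j$, the function $\partial^{\beta}f$ solves $\partial_{t}\partial^{\beta}f+(v\cdot\nabla)\partial^{\beta}f=\partial^{\beta}g-\mathcal{C}_{\beta}$, where $\mathcal{C}_{\beta}:=\partial^{\beta}\big((v\cdot\nabla)f\big)-(v\cdot\nabla)\partial^{\beta}f$ is a finite sum of terms in each of which at least one derivative has fallen on $v$, i.e.\ of products $(\partial^{\gamma}v)\cdot\nabla(\partial^{\delta}f)$ with $|\gamma|\ge1$ and $|\gamma|+|\delta|\le j$. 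Since $B_{R}$ is bounded, $C^{0,\alpha}(\overline{B_{R}})$ is a Banach algebra; writing $\partial^{\gamma}v=\partial^{\gamma'}(\partial_{i}v)$ for some $i$ and $|\gamma'|=|\gamma|-1\le j-1$, and observing that $\nabla\partial^{\delta}f$ involves at most $j$ derivatives of $f$, one gets
\begin{equation*}
\sum_{|\beta|\le j}\|\mathcal{C}_{\beta}(t,\cdot)\|_{C^{0,\alpha}(B_{R})}\le C\,\|\nabla v(t,\cdot)\|_{C^{j-1,\alpha}(B_{R})}\,\|f(t,\cdot)\|_{C^{j,\alpha}(B_{R})}=C\,V(t)\,\|f(t,\cdot)\|_{C^{j,\alpha}(B_{R})},
\end{equation*}
with $C=C(j,\alpha)$. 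Applying the differential form of the $j=0$ estimate to each $\partial^{\beta}f$ with source $\partial^{\beta}g-\mathcal{C}_{\beta}$ and summing over $|\beta|\le j$, one reaches, for the norm $y(t):=\sum_{|\beta|\le j}\|\partial^{\beta}f(t,\cdot)\|_{C^{0,\alpha}(B_{R})}$ (equivalent to $\|\cdot\|_{C^{j,\alpha}(B_{R})}$ with constants depending only on $j,\alpha$), a differential inequality of the form $\tfrac{d}{dt}y(t)\le C\,V(t)\,y(t)+C\,\|g(t,\cdot)\|_{C^{j,\alpha}(B_{R})}$; multiplying by $\exp(-C\int_{0}^{t}V)$, integrating over $[0,t]$, and returning to $\|\cdot\|_{C^{j,\alpha}(B_{R})}$ gives the statement, with $C=C(j,\alpha)$.

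The technical heart is the commutator bound: what makes it work is the transport structure, which guarantees that only $\nabla v$, never $v$ itself, appears in $\mathcal{C}_{\beta}$, so that the controlling quantity is $V(t)=\|\nabla v(t,\cdot)\|_{C^{j-1,\alpha}}$ (resp.\ $\|\nabla v(t,\cdot)\|_{L^{\infty}}$ when $j=0$) rather than $\|v(t,\cdot)\|_{C^{j,\alpha}}$. The remaining ingredients --- the continuity/limiting argument yielding the differential inequality and the bookkeeping with equivalent norms --- are routine; an alternative is to run the whole argument at once in Littlewood--Paley variables, estimating the commutators by Bony's paraproduct decomposition, as in \cite{BCD}.
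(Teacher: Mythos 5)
Your proof is correct, but be aware that the paper contains no proof of this lemma to compare against: the estimate is invoked directly from \cite[Theorem 3.14]{BCD}. In that reference the result is proved for general Besov spaces $B^{s}_{p,r}$ by Littlewood--Paley analysis: one applies the dyadic blocks $\Delta_q$ to the equation, controls the commutators $[\Delta_q,\,v\cdot\nabla]f$ through Bony's paraproduct decomposition, and closes with Gronwall; the H\"older case used in the paper is essentially the case $p=r=\infty$, $s=j+\alpha$ noninteger. Your route --- the characteristics representation plus the Lipschitz bound on the flow for $j=0$, then the Leibniz/commutator estimate $\|\partial^{\beta}\bigl((v\cdot\nabla)f\bigr)-(v\cdot\nabla)\partial^{\beta}f\|_{C^{0,\alpha}}\le C\,V(t)\,\|f(t,\cdot)\|_{C^{j,\alpha}}$ for $j\ge1$, using that only $\nabla v$ appears --- is the classical elementary proof in H\"older spaces, and it is entirely adequate here, where the transporting field is Lipschitz (indeed $C^{k,\alpha}$) and no rough-velocity or loss-of-regularity refinement is needed; what the Besov-space proof buys is generality the paper never uses. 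Two pieces of bookkeeping to make explicit if you write this up: either justify the a.e.\ differential inequality for $t\mapsto\|f(t,\cdot)\|_{C^{0,\alpha}}$ or simply keep the integral form of the subinterval estimate, which already suffices for the Gronwall step; and note that if one takes $\|f\|_{C^{j,\alpha}}$ to include only the top-order H\"older seminorms, your summed quantity $\sum_{|\beta|\le j}\|\partial^{\beta}f\|_{C^{0,\alpha}}$ is only an equivalent norm, so a harmless multiplicative constant appears in front of the bracket --- irrelevant for the way the lemma is used in the paper, but worth stating since the lemma as written displays none.
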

\noindent
\begin{proof}[Proof of Proposition \ref{th:Faunpointfixe}]
We establish Proposition \ref{th:Faunpointfixe} and find a fixed point of $F$ in $X_{\nu}$ via Schauder's fixed point theorem. Accordingly, we prove that, $\nu$ being fixed and for $\| u_0 \|_{C^{k,\alpha}}$ small enough, $F$ sends $X_{\nu}$ into itself, that $F$ is continuous and $F(X_{\nu})$ is relatively compact for the uniform topology on $X_{\nu}$. \par
\ \par
\noindent
$\bullet$ Using Lemma \ref{EstTransport}, we see that
\begin{multline*}
\| \omega_{u}(t,\cdot) \|_{C^{k-1,\alpha}(B_{R})} \leq
\exp \left( C\int_{0}^{t} V(s) \, ds \right) \bigg[\| \omega_{u}(0,\cdot) \|_{C^{k-1,\alpha}(B_{R})} \\
+ C \int_{0}^{t} \exp \left( - C\int_{0}^{\tau} V(s) \, ds \right)  \| \tilde{u} (\tau,\cdot) \|_{C^{k,\alpha}(B_{R})} \| \omega_{u} (\tau,\cdot) \|_{C^{k-1,\alpha}(B_{R})} \, d \tau \bigg],
\end{multline*}
with as before
\begin{equation*}
V(s):=  \| \nabla \tilde{u} (s,\cdot) \|_{C^{k-2,\alpha}(B_{R})} \ \text{ if } \ k \geq 2 \ \text{ and } \
V(s):= \| \nabla \tilde{u} (s,\cdot) \|_{L^{\infty}(B_{R})} \ \text{ if } \ k =1. 
\end{equation*}
We apply Gronwall's lemma to
\begin{equation*}
t \mapsto \| \omega_{u}(t,\cdot) \|_{C^{k-1,\alpha}(B_{R})}  \exp \left( - C\int_{0}^{t} V(s) \, ds  \right),
\end{equation*}
and deduce
\begin{equation*}
\| \omega_u(t)\|_{C^{k-1,\alpha}(B_{R})} \leq \| \omega_u(0)\|_{C^{k-1,\alpha}(B_{R})}
e^{C\| \tilde{u}\|_{L^{\infty}(0,1;C^{k,\alpha}(B_{R}))}}.
\end{equation*}
Thus with the definition of $X_{\nu}$ and the continuity of $\pi$, we obtain that
\begin{equation}
\label{eq:estimeesurOmegadeomega}
\| \omega_u(t)\|_{C^{k-1,\alpha}(B_{R})} \leq \| \omega_u(0)\|_{C^{k-1,\alpha}(B_{R})}
e^{C (\| \overline{y}\|_{L^{\infty}(0,1;C^{k,\alpha}({\Omega}))} +1)}.
\end{equation}
Now by \eqref{eq:lambdai2} we have
\begin{equation*}
|\lambda_i(t)| \leq C (| \lambda_{i}(0)| + t \| \omega_u \|_{C^{0,\alpha}(B_{R})} \| u \|_{C^{1,\alpha}(B_{R})} ),
\end{equation*}
and thus, with \eqref{eq:lambdai1}, \eqref{eq:estimeesurOmegadeomega} and the definition of $X_{\mu}$, we deduce
\begin{equation}
\label{eq:estimeesurlambdai}
|\lambda_i(t)| \leq C \| u_0\|_{C^{k,\alpha}(B_{R})} 
\left( 1+ 
t e^{ C (\| \overline{y} \|_{L^{\infty}(0,1;C^{k,\alpha}(\Omega))} + 1 ) } [\| \overline{y} \|_{L^{\infty}(0,1;C^{k,\alpha}({\Omega}))} + \nu] \right).
\end{equation}
Thus, by combining \eqref{eq:estimeesurOmegadeomega}, \eqref{eq:estimeesurlambdai}, and the elliptic estimates given by \eqref{eq:defmathfrav}-\eqref{eq:defdev}, we infer that
\begin{equation}
\| F(u) - \bar{y} \|_{L^{\infty}(0,1;C^{k,\alpha}(\overline{\Omega})) } \leq C(\| \overline{y} \|_{k,\alpha})
\| u_0 \|_{C^{k,\alpha}(\Omega)},
\end{equation}
for some constant $C$ depending on $k$, $\alpha$ and $\overline{y}$. It follows that for $\| u_{0}\|_{C^{k,\alpha}(\Omega)}$ small enough, $F$ sends $X_{\nu}$ into itself. \par
\ \par
\noindent
$\bullet$ That $F(X_{\nu})$ is relatively compact is seen easily: given $(u_{n}) \in X_{\nu}^{\N}$, the sequence $(F(u_{n}))$ belongs to $X_{\nu}$ and, following the construction, it is easy to see that $(\partial_{t} F(u_{n}))$ is bounded in $L^{\infty}(0,1;C^{k-1}(\Omega))$. The conclusion follows then from Ascoli's theorem. \par
\ \par
\noindent
$\bullet$ Finally, that $F$ is continuous for the uniform topology can be seen as follows. Let us be given $(u_{n}) \in X_{\nu}^{\N}$ converging uniformly to $u \in X_{\nu}$. The flows $\Phi_{n}$ associated to $\tilde{u}_{n}$ converge uniformly towards the flow $\Phi$ associated to $\tilde{u}$. Hence one can see that $\omega_{u_{n}}$ converges uniformly to $\omega_{u}$. Due to the bounds on $\omega_{u}$, this convergence also takes place in $L^{\infty}(0,1;C^{k-1,\beta}(B_R))$ for any $\beta < \alpha$. We deduce in a straightforward manner the convergence of $(v_{n})$ and $(\lambda_{i}^{n})$ corresponding to $u_{n}$ towards $v$ and $\lambda_{i}$ corresponding to $u$, and the conclusion follows. \par
\ \par
It follows that $F$ admits a fixed point $u$ in $X_{\nu}$.
This concludes the proof of Proposition~\ref{th:Faunpointfixe}.
\end{proof}
\subsection{Relevance of the fixed point}
Call $\overline{u}$ the fixed point obtained above. \par
\ \par
\noindent
{\bf 1.} Let us first check that $\overline{u}$ is a solution of the Euler equation in $[0,1] \times \Omega$. From \eqref{eq:transportomegaparutilde}, and since $\tilde{u}=u$ in $[0,1] \times \Omega$, we deduce that
\begin{equation*}
\curl (\partial_{t} u + (u \cdot \nabla) u )=0 \ \text{ in } \ [0,1] \times \Omega.
\end{equation*}
From \eqref{eq:lambdai2} and $(u \cdot \nabla) u = \nabla \frac{|u|^{2}}{2} + (\curl u) \wedge u$, we see that
\begin{equation*}
\int_{\Omega} (\partial_{t} u + (u \cdot \nabla) u ) \cdot Q_{i} \, dx =0,
\end{equation*}
which together with Proposition \ref{th:tem.1} proves the claim. \par
\ \par
\noindent
{\bf 2.} Now we prove that $\bar{u}$ fulfills the requirements of Theorem \ref{th:fond.1} for $\| u_{0} \|_{C^{k,\alpha}}$ small enough. \par
Given $x \in B_R$, we have
\begin{equation*}
\dot{\phi}^{\bar{u}}(t,0,x) - \dot{\phi}^{\bar{y}} (t,0,x)=\bar{u}(t,{\phi^{\bar{u}}}(t,0,x))-\bar{y}(t,{\phi^{\bar{u}}}(t,0,x))
+\bar{y}(t,{\phi^{\bar{u}}}(t,0,x)) - \bar{y}(t,{\phi^{\bar{y}}}(t,0,x)).
\end{equation*}
We deduce easily that for a constant $C>0$ depending on $\overline{y}$ only,
\begin{equation*}
|\dot{\phi}^{\bar{u}}(t,0,x) - \dot{\phi}^{\bar{y}}(t,0,x)| \leq \nu + C | {\phi^{\bar{u}}}(t,0,x) - {\phi^{\bar{y}}}(t,0,x)| .
\end{equation*}
Thus by Gronwall's lemma we have $|x(t)-y(t)| \leq C\nu $ where $C$ depends only on $\bar{y}$. 
Reasoning in the same way for the derivatives (up to order $k$) with respect to $x$ of the flows, we obtain
\begin{equation*}
\| \phi^{\tilde{u}}(t, 0, \cdot ) - \phi^{\overline{y}}(t, 0, \cdot ) \|_{C^{k}(B_R)} \leq C \nu.
\end{equation*}
Hence taking $\nu$ small enough (and hence $\| u_0\|_{C^{k,\alpha}}$ even smaller), we can obtain \eqref{eq:exact2} and \eqref{eq:approx} for $T=1$. 
In order words, there exists $\overline{c}>0$, such that for any $u_{0}$ in $C^{k,\alpha}$ with $\| u_0\|_{C^{k,\alpha}} \leq c$, one can find a solution of the Euler equation for $T=1$, satisfying \eqref{eq:exact2} and \eqref{eq:approx}. \par
\ \par
\noindent
{\bf 3.} Let us now explain how we can obtain the result without the condition of smallness of $u_0$ (but for $T$ small enough). Given $u_{0} \in C^{k,\alpha}(\Omega)$, we rescale it by considering 
\begin{equation*}
v_0=\rho u_0,
\end{equation*}
with $\rho>0$ small enough so that $v_{0}$ satisfies $\| v_0\|_{C^{k,\alpha}} \leq \overline{c}$ .
Applying the above construction to $v_0$ gives us a solution $(u,p)$ of \eqref{eq:euler.1}-\eqref{eq:euler.3} defined on $t \in [0,1]$ with $u(0,\cdot)=v_0$ such that 
\begin{equation*}
\| \phi^u(1,0,\gamma_0) - \gamma_1 \|_\infty < \eps.
\end{equation*}
If we define $u_\rho$ by 
\begin{equation*}
u_\rho(t,x)=\frac{1}{\rho} u \left( \frac{t}{\rho}, x \right),
\end{equation*}
then $u_\rho$ is defined on $t\in [0,\rho]$ and 
\begin{equation*}
\| \phi^{u_\rho}(\rho,0,\gamma_0)-\gamma_1\| <\eps.
\end{equation*}
This concludes the proof of Theorem \ref{th:fond.1}. \par
%
%
%


\end{document}